\renewcommand{\bar}[1]{{\overline{#1}}}
\numberwithin{equation}{section}
\numberwithin{figure}{section}
\theoremstyle{plain}
\newtheorem{theorem}{Theorem}[section]
\newtheorem{lemma}[theorem]{Lemma}
\newtheorem{proposition}[theorem]{Proposition}
\newtheorem{corollary}[theorem]{Corollary}
 \newtheorem{question}[theorem]{Question}
\theoremstyle{definition}
\newtheorem{definition}[theorem]{Definition}
\newtheorem{remark}[theorem]{Remark}
\newtheorem{example}[theorem]{Example}
\newcommand{\stk}[1]{{\mathcal #1}}
\newcommand{\tilstk}[1]{{\til{\stk #1}}}
\newcommand{\barstk}[1]{{\bar{\stk #1}}}
\def\setcomp{\smallsetminus}
\def\kk{{\mathbb K}}
\def\integ{{\mathbb Z}}
\def\iso{\cong}
\def\ff{{\mathbb F}}
\global\let\hom\undefined
\DeclareMathOperator{\hom}{Hom}
\def\mmu{{\pmb\mu}}
\def\rat{{\mathbb Q}}
\newcommand{\st}[1]{\{#1\}}
\DeclareMathOperator{\spec}{Spec}
\DeclareMathOperator{\codim}{codim}
\def\ra{\rightarrow}
\DeclareMathOperator{\jac}{Jac}
\newcommand{\oneover}[1]{\frac{1}{#1}}
\newcommand{\half}[1]{\frac{#1}{2}}
\def\del{\partial}
\newcommand{\til}[1]{{\widetilde{#1}}}
\def\inject{\hookrightarrow}
\def\cx{{\mathbb C}}
\newcommand{\abs}[1]{{\left|#1\right|}}
\newcommand{\floor}[1]{{\lfloor #1 \rfloor}}
\def\cross{\times}
\DeclareMathOperator{\pic}{Pic}
\def\gp{{\mathbb G}}
\newenvironment{alphabetize}{\begin{enumerate}

}{\end{enumerate}}
\def\calm{{\mathcal M}}
\def\cala{{\mathcal A}}
\begin{document}

\title{Generic Newton polygons for curves of given $p$-rank}

\author{Jeff Achter}
\address{Colorado State University, Fort Collins, CO 80523}
\email{achter@math.colostate.edu}
\urladdr{http://www.math.colostate.edu/~achter}

\author{Rachel Pries}
\address{Colorado State University, Fort Collins, CO 80523}
\email{pries@math.colostate.edu}
\urladdr{http://www.math.colostate.edu/~pries}

\begin{abstract}
We survey results and open questions about the $p$-ranks and Newton polygons of Jacobians of curves in positive characteristic $p$.  
We prove some geometric results about the $p$-rank stratification of the moduli space of (hyperelliptic) curves.
For example, if $0 \leq f \leq g-1$, 
we prove that every component of the $p$-rank $f+1$ stratum of $\calm_g$ contains a component of the $p$-rank $f$ stratum in its closure.
We prove that the $p$-rank $f$ stratum of $\overline{\calm}_g$ is connected.
For all primes $p$ and all $g \geq 4$, 
we demonstrate the existence of a Jacobian of a smooth curve, defined over $\bar\ff_p$,  whose Newton polygon has slopes $\{0, \frac{1}{4}, \frac{3}{4}, 1\}$.
We include partial results about the generic Newton polygons of curves of given genus $g$ and $p$-rank $f$.
\end{abstract}

\subjclass[2010]{11G20, 11M38, 14H10, 14H40, 14L05, 11G10}
%%14K15
%
\thanks{
The first author is supported in part by Simons Foundation grant 204164. 
The second author is supported in part by NSF grant DMS-11-01712.}

%\acknowledgments{
%We thank the organizers for the opportunity to participate in  the RICAM workshop on algebraic curves over finite fields.
%}

\maketitle

\section{Introduction}

Suppose $C$ is a smooth projective curve of genus $g$ defined over a finite field $\ff_q$ of characteristic $p$.
Then its zeta function has the form $Z_{C/\ff_q}(T)=\frac{L_{C/\ff_q}(T)}{(1-T)(1-qT)}$ for some polynomial $L_{C/\ff_q}(T) \in {\mathbb Z}[T]$.
The Newton polygon $\nu$ of $C$ is that of $L_{C/\ff_q}(T)$; 
it is a lower convex polygon in ${\mathbb R}^2$ with endpoints $(0,0)$ and $(2g,g)$.
Its slopes encode important information about $C$ and its Jacobian.

Given a curve $C/\ff_q$ of genus $g$, there are methods to compute its Newton polygon.  
After some experiments, it becomes clear that the typical Newton polygon has slopes only $0$ and $1$.
For small $g$ and $p$, the other possible Newton polygons do occur, but rarely, leading us to the following question.

\begin{question}
Does every Newton polygon of height $2g$ (satisfying the obvious necessary conditions)
occur as the Newton polygon of a smooth curve defined over a finite field of characteristic $p$ for each prime $p$?
\end{question}

The answer to this question is unknown, although one now knows that every integer $f$ such that $0 \leq f \leq g$ occurs as the
length of the line segment of slope $0$ for the Newton polygon of a
curve in each characteristic $p$ \cite{fvdg}.  As an example, we
consider the first open case, when $g=4$ and $\nu$ has slopes
$\frac{1}{4}$ and $\frac{3}{4}$.  We confirm in Lemma \ref{lemg4} that
this Newton polygon occurs for a curve of genus $4$ for each prime $p$
using a unitary Shimura variety of type $U(3,1)$.

The main idea in this paper is that the occurrence of a certain Newton polygon for a curve of small genus
can be used to prove the occurrence of new Newton polygons for smooth curves for every larger genus.  
As an application, we prove in Corollary \ref{thmgnpg4}
that the Newton polygon $\nu_g^{g-4}$ having $g-4$ slopes of $0$ and $1$ and 
four slopes of $\frac{1}{4}$ and $\frac{3}{4}$ occurs as the Newton polygon of a smooth curve of genus $g$
for all primes $p$ and all $g \geq 4$.

The key condition above is that the curve must be smooth, 
because it is easy to produce singular curves with decomposable Newton polygons by 
clutching together curves of smaller genus.
In order to deduce results about Newton polygons of smooth curves from results about Newton polygons of singular curves, 
we rely on geometric methods from \cite{achterpriesprank}.
It turns out that one of the best techniques to determine the existence of a curve whose Jacobian has specified behavior 
is to study the geometry of the corresponding loci in $\stk M_g$, the moduli space of smooth proper curves of genus $g$.  

More precisely, the $p$-rank $f$ and Newton polygon are invariants of
the $p$-divisible group of a principally polarized abelian variety.
The stratification of the moduli space $\cala_g$ by these invariants
is well-understood, in large part because of work of Chai and Oort.  Let $\stk A_g$ be the moduli space of principally polarized abelian varieties of dimension $g$.  
The Torelli map $\tau: \stk M_g \inject \stk A_g$, which sends a curve
to its Jacobian, allows us to define the analogous stratifications on
$\calm_g$.  For dimension reasons, this gives a lot of information
when $1 \leq g \leq 3$ and very little information when $g \geq 4$.
For example, in most cases it is not known whether the $p$-rank $f$
stratum $\calm_g^f$ is irreducible.

In Section \ref{secdefinitions}, we review the fundamental definitions
and properties of the $p$-rank and Newton polygon.    In Section \ref{secag}, we review the $p$-rank and Newton polygon stratifications of $\stk A_g$.
Since degeneration
is one of the few techniques for studying stratifications in $\stk
M_g$, in Section \ref{subsecdm} we recall the Deligne-Mumford
compactification of $\stk M_g$, and explain how it interacts with the
$p$-rank stratification.

In Section \ref{subsecmgbarf}, we review a theorem that we proved about the boundary of the $p$-rank strata $\calm_g^f$ of $\calm_g$ in \cite{achterpriesprank}.
Using this, we prove that $\overline{\calm}^f_g$ is connected for all $g \geq 2$ and $0 \leq f \leq g$ (Corollary \ref{thmgconn}).
For $f \geq 1$, we also prove that every component of $\calm_g^f$ contains a component of $\calm_g^{f-1}$ in its closure (Corollary \ref{PnestedMg}).
%Section \ref{secstratprank} recalls known facts about the $p$-rank
%stratification of $\stk A_g$ and, for the most part, successfully
%reproduces corresponding results for $\stk M_g$.  

In Section \ref{secstratnp}, we consider the finer stratification of
$\calm_g$ by Newton polygon.  We consider a Newton polygon $\nu_g^f$
which is the most generic Newton polygon of an abelian variety of
dimension $g$ and $p$-rank $f$.  The expectation is that
the generic point of every component of $\calm_g^f$
represents a curve with Newton polygon $\nu_g^f$.  We prove that this expectation holds 
in the first non-trivial case when $f=g-3$ in Corollary
\ref{cormgnpg3} and prove a slightly weaker statement when $f=g-4$
in Corollary \ref{thmgnpg4}.
%At present time, the situation for Newton polygons is much less
%satisfactory.  In Section \ref{secstratnp}, we review the striking work
%of Oort and Chai-Oort on the Newton stratification of $\stk A_g$, and
%survey what is known about $\stk M_g$. 

The discrete invariants associated to these stratifications seem to
influence arithmetic attributes of curves over finite fields, such as
automorphism groups and maximality.  One should note, however, that
this relationship is somewhat subtle.
On one hand, many exceptional curves turn out to be {\em
supersingular}, meaning that the Newton polygon is a line segment of slope $\frac{1}{2}$.
For example, it is not hard to prove that a curve which achieves the Hasse-Weil bound over a finite field
must be supersingular.
On the other hand, the $p$-rank stratification is in some ways ``transverse'' to
other interesting loci in $\stk M_g$, illustrated by the fact that a randomly chosen
Jacobian of genus $g$ and $p$-rank $f$ behaves like a randomly
selected principally polarized abelian variety of dimension $g$.
In  Section \ref{subsecopenprank} and Section \ref{secstratnp} we discuss  open questions and conjectures on these topics.

%Briefly consider complex abelian varieties of fixed dimension $g$.
%Since each such $X$ admits a uniformization $X \iso \cx^g/\Lambda$, the
%$N$-torsion $X[N]$, and the $N$-divisible group $X[N^\infty] := \lim_n
%X[N^n]$, only depend (as abstract groups, or even as (ind-)group
%schemes over $\cx$) on $g$.  In contrast, if one considers the
%$p$-torsion of abelian varieties of abelian varieties in positive
%characteristic $p$, even over an algebraically closed field, one finds
%that there are several different possibilities for the isomorphism
%class.  

%Phenomena like this give rise to several, related
%stratifications on $\stk A_g$, the moduli space of prinicpally
%polarized abelian varieties of dimension $g$ in (notationally
%suppressed) characteristic $p>0$.  The Torelli map $\tau: \stk M_g
%\ra \stk A_g$, which sends a curve to its canonically principally
%polarized Jacobian, then lets us pull back these stratifications to
%$\stk M_g$.  In this survey we specifically study the stratifications
%of $\stk M_g$ by $p$-rank and by Newton polygon.  

We thank the organizers for the opportunity to participate in  the RICAM workshop on algebraic curves over finite fields.

\section{Structures in positive characteristic}
\label{secdefinitions}

Consider a principally polarized abelian variety $X$ of dimension $g$ defined over a field
$K$ of characteristic $p>0$.  If $N\ge 2$ is relatively prime to $p$,
then  the $N$-torsion group scheme $X[N]$ is \'etale, and 
$X[N](\bar K) \iso (\integ/N)^{\oplus 2g}$ depends only on the
dimension of $X$.  In contrast, $X[p]$ is {\em never} reduced, and
there is a range of possibilities for the geometric isomorphism class
of $X[p]_{\bar K}$ and, {\em a fortiori}, the $p$-divisible group
$X[p^\infty] := \lim_{\ra n} X[p^n]$.  In this section, we review some
attributes of $X[p]$ and $X[p^\infty]$, with special emphasis on the
case where $X$ is the Jacobian of a curve over a finite field.  

\subsection{The $p$-rank}
\label{subsecprank}

The $p$-rank of $X$ is the rank of the ``physical'' $p$-torsion of
$X$.  More precisely, it is the integer
$f$ such that 
\begin{equation}
\label{eqdefprank}
X[p](\bar K) \iso (\integ/p)^{\oplus f}.
\end{equation}

We will see below (\ref{subsubsecnparbitrary}) that $0 \le f \le g$.  The abelian variety $X$ is
said to be ordinary if its $p$-rank is maximal, i.e., $f=g$.  

Specifying a $K$-point of
$X[p]$ is equivalent to specifying a homomorphism $X[p] \ra (\integ/p)$
of group schemes over $K$, and thus one may also define $f$ by
\[
f = \dim_{\ff_p}\hom_{\bar K}(X[p], (\integ/p)).
\]
Now, $X[p]$ is a self-dual group scheme, and the dual of $(\integ/p)$
is the non-reduced group scheme $\mmu_p$, the kernel of Frobenius on the multiplicative group ${\mathbb G}_m$.  
Consequently, it is
equivalent to define the $p$-rank of $X$ as
\[
f = \dim_{\ff_p}\hom_{\bar K}(\mmu_p, X[p]).
\]
(This last formulation is convenient for defining the $p$-rank of semiabelian varieties and semistable curves.)

If $X$ is the Jacobian of a smooth, projective curve $C$, then the $p$-rank equals the maximum
rank of a $p$-group which occurs as the Galois group of an unramified cover of $C$
\cite[Corollary 4.18]{milne}.

\subsection{Newton polygons}

\subsubsection{Newton polygon of a curve over a finite field}
\label{subsubnpcurve} 

Let $C/\ff_q$ be a smooth, projective curve of genus $g$.  Then its
zeta function 
\[Z_{C/\ff_q}(T) = \exp(\sum_{k \ge 1} \#C(\ff_{q^k}) T^k/k)\]
is a rational function of the form
\[Z_{C/\ff_q} = \frac{L_{C/\ff_q}(T)}{(1-T)(1-qT)}\]
where $L_{C/\ff_q}(T) \in \integ[T]$ is a polynomial of degree $2g$.  The $L$-polynomial factors
over $\bar\rat$ as 
\[L_{C/\ff_q}(T) = \prod_{1 \le j \le 2g}(1 - \alpha_jT)\]
where the roots can be ordered so that
\begin{equation}
\label{eqprodq}
\alpha_j \alpha_{g+j} = q\text{ for each } 1 \le j  \le g.
\end{equation}

Each $\alpha_j$ has archimedean size $\sqrt q$; for each
$\iota:\bar\rat \inject \cx$, one has $\abs{\iota(\alpha_j)} = \sqrt
q$.  In contrast, there is a range of possibilities for the $p$-adic
valuations of the $\alpha_j$.  The Newton polygon of $C$ (or of its
Jacobian $X$) is a combinatorial device which encodes these
valuations.

Let $\kk$ be a field with a discrete valuation $v$, and let $h(T) = \sum
a_iT^i\in \kk[T]$ be a polynomial.  The Newton polygon of $h(T)$ is
defined in the following way.

In the plane, graph the points $(i, v(a_i))$, and form its lower
convex hull.  This object is called the Newton polygon of $h$.
Equivalently, it suffices to track the multiplicity $e(\lambda)$ with
which each slope $\lambda$ occurs in the diagram.  Thus, we will often
record a Newton polygon as the function
%% \begin{diagram}
%% \rat & \rto & \integ_{\ge 0} \\
%% \lambda & \rmto & e(\lambda).
%% \end{diagram}
 \[
 \xymatrix{
 \rat \ar[r] & \integ_{\ge 0} \\
 \lambda \ar@{|->}[r] & e(\lambda)
 }
 \]
which, to each $\lambda$, assigns the length of the projection of
the ``slope $\lambda$'' part of the Newton polygon onto its first
coordinate.    This function encodes the valuation of the roots of
$h$.  More precisely, it is not hard to check that the number of
$\alpha \in \bar \kk$ such that $v(\alpha) = -\lambda$ and $h(\alpha)
= 0$ is $e(\lambda)$.

Write $\lambda = a_\lambda/b_\lambda$ with $\gcd(a_\lambda,b_\lambda)
= 1$.   Since $h(T)$ is defined over $\kk$, there is an integrality constraint
\begin{equation}
\label{eqintegrality}
e(\lambda) \lambda \in \integ \text{ for each }\lambda \in \rat, 
\end{equation}
which implies that the line segments of the Newton polygon break at points with integral coordinates.
Also, 
\begin{equation}
\label{eqdegree}
\sum_\lambda e(\lambda)/b_\lambda = \deg h.
\end{equation}
We will often
work with the equivalent data
\[
m(\lambda) := e(\lambda)/b_\lambda.
\]

Now equip $\rat$ with the $p$-adic valuation, normalized so that $v(q)
= 1$.  The Newton polygon of $C/\ff_q$ is that of $L_{C/\ff_q}(T)$.
The choice of $p$-adic valuation means that the Newton polygon of $C$
is unchanged by finite extension of the base field.  Moreover, the
relation \eqref{eqprodq} implies that
\begin{align}
\label{eqsupport}
e(\lambda) &=0\text{ if }\lambda\not \in \rat\cap [0,1] \\
\label{eqsymmetry}
e(\lambda) &= e(1-\lambda).
\end{align}
A Newton polygon satisfying \eqref{eqintegrality}, \eqref{eqsupport}
and \eqref{eqsymmetry} will be called an admissible symmetric Newton
polygon of height $\sum_\lambda e(\lambda)/b_\lambda$.  

\subsubsection{Examples}

Let $E/\ff_q$ be an elliptic curve.  There is an integer $a$ such that
$\abs a \le 2 \sqrt q$ such that
\[
\#E(\ff_q) = 1-a+q.
\]
Then 
%\begin{align*}
\[Z_{E/\ff_q}(T) = \frac{1-aT+ qT^2}{(1-T)(1-qT)}.\]
%f_{E/\ff_q}(T) &= T^2-aT+q.
%\end{align*}
Suppose $\gcd(a,p) = 1$.  (This is the generic case.)  Then the
Newton polygon of $E$ is the lower convex hull of the points 
\[
\st{(0,1),(1,0),(2,1)},
\]
and the slopes of $E$ are $\st{0,1}$; we have
\[
m(\lambda) = \begin{cases}
1 & \lambda \in \st{0,1} \\
0 &\text{otherwise}
\end{cases}.
\]
Such an elliptic curve is
called ordinary. 

Suppose to the contrary that $\gcd(a,p) = p$.  Then the Newton polygon
of $E$ is the lower convex hull of points
\[
\st{(0,1),(1, \ge \frac 12), (2,1)},
\]
and the only slope of $E$ is $\st{1/2}$; $m(1/2) = 1$, and all other
multiplicities are zero. Such an elliptic curve is
called supersingular.   (See Figure \ref{figecnp}.)

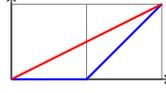
\begin{figure}
\begin{center}
\begin{tikzpicture}
\draw [gray,very thin](0,0) grid (2,1);
\draw[<->,black] (0,1.1) -- (0,0) -- (2.1,0);
\draw [blue,thick](0,0) -- (1,0) -- (2,1);
\draw [red,thick](0,0) -- (2,1);
\end{tikzpicture}
\caption{Newton polygons for elliptic curves.}\label{figecnp}
\end{center}
\end{figure}

We will use the next example (where $g=4$) in the proof of Lemma \ref{lemg4}.

\begin{example} \label{Ep=3g=4}
There exists a (hyperelliptic) curve of genus $4$ defined over $\ff_3$ whose Newton polygon has slopes $1/4$ and $3/4$.
\end{example}

\begin{proof}
Using Sage, we calculated a list of possibilities for monic
polynomials $f(x) \in \ff_3[x]$ of degree nine such that the  
hyperelliptic curve $C:y^2=f(x)$ has $3$-rank $0$.
One such possibility is $$f(x)=x^9 + x^7 + x^6 + 2x^5 + x^4 + 2x^3 + x^2 + x.$$

For the prime $3$, Sage cannot directly compute the L-polynomial of $C$.
Instead, we compute the degree 4 truncation of the zeta function of $C$ to be:
$$127T^4 + 40T^3 + 13T^2 + 4T + 1.$$
The zeta function of $C$ has the form $Z_{C/\ff_3}(T)=L_{C/\ff_3}(T)/(1-T)(1-3T)$
where $$L_{C/\ff_3}(T) = 1+aT +bT^2 +c T^3 +d T^4 +3c T^5 +9bT^6 + 27aT^7 + 81T^8$$
for some coefficients $a,b,c,d$. 
By taking the degree four Taylor polynomial of $Z_{C/\ff_3}(T)$, we solve $a=b=c=0$ and $d=6$.
Then the slopes of the Newton polygon of $L_{C/\ff_3}(T) = 81T^8 + 6T^4 + 1$ are $1/4$ and $3/4$.
\end{proof}

\subsubsection{Newton polygon of an abelian variety over an arbitrary
  field}
\label{subsubsecnparbitrary}

There is an equivalent notion of Newton polygon which also makes sense
for an arbitrary abelian variety over an arbitrary field of positive
characteristic.  For each $\lambda \in \rat_{\ge 0}$, write $\lambda =
a_\lambda/b_\lambda = a_\lambda/(a_\lambda+c_\lambda)$ with $a_\lambda$ and $b_\lambda$ relatively
prime.  Manin defines a certain $p$-divisible group $G_\lambda =
G_{a_\lambda, c_\lambda}$ over $\ff_p$, with dimension $a_\lambda$,
codimension $c_\lambda$ and, thus, height $b_\lambda$ \cite{manincfg}.
If $G$ is any $p$-divisible group over an algebraically
closed field $k$, then there is an isogeny $G \ra \oplus_\lambda
H_\lambda^{\oplus m_G(\lambda)}$.  The isogeny itself is not
canonical, but the collection of all nonnegative integers
$\st{m_G(\lambda)}$ is independent of all choices.    Let
$e_G(\lambda) = b_\lambda m_G(\lambda)$.

Now let $X/K$ be an abelian variety of dimension $g$; the Newton
polygon of $X$ is that of its $p$-divisible group $X[p^\infty]_{\bar
  K}$.  It is not hard to verify that $e_{X[p^\infty]}$ is an
admissible symmetric Newton polygon of height $2g$.  In the special
case where $C/\ff_q$ is a smooth projective curve over a finite field,
the Newton polygon of $C$ (as defined in Section
\ref{subsubnpcurve}) coincides with that of its Jacobian.  In
general, we define the Newton polygon of $C/K$ as that of its
Jacobian.

The Newton polygon of a $p$-divisible group, and thus of an abelian
variety, is invariant under isogeny.   (Note, however, that even if
$X[p^\infty]$ and $Y[p^\infty]$ are isogenous, it does {\em not} follow that $X$ and
$Y$ are isogenous.)  Moreover, if $G$ and $H$ are
$p$-divisible groups, it follows immediately from the definition that
\[
m_{G\oplus H}(\lambda)  = m_G(\lambda) + m_H(\lambda).
\]

The symmetry condition $m(\lambda) = m(1-\lambda)$ forces
the inequality $0 \le f_X \le \dim X$ noted in Section \ref{subsecprank}.

An abelian variety $X/K$ is ordinary if and only if all its slopes are $0$ and $1$.
The $p$-rank of $X$ is equal
to the multiplicity $m(0)$ of the slope $0$ in the Newton polygon. 
An abelian variety is {\it supersingular} if all the slopes of its Newton polygon equal $1/2$.
Thus, if an abelian variety is supersingular, then it has $p$-rank zero.  However, in
dimension at least three, the converse is false.  
For example, there are abelian varieties with $p$-rank $0$ whose Newton polygons have slopes $\frac{1}{g}$ and $\frac{g-1}{g}$
and are thus not supersingular for $g \geq 3$.

\subsection{Semicontinuity and purity}

We now consider a family of $p$-divisible groups, such as that coming
from a family of abelian varieties in characteristic $p$.  It is not too hard to show that
the $p$-rank is a lower semicontinuous function, i.e., that it can
only decrease under specialization.  In fact, if the $p$-rank does
change, it does so in codimension one:

\begin{lemma}
\label{lempurityprank}
 \cite[Lemma 1.6]{oort74}
Let $X \ra S$ be an abelian variety over an integral scheme in
positive characteristic, and suppose that $X$ has generic $p$-rank
$f$.  Let $S^{<f}\subset S$ denote the locus parametrizing those $s$
such that $X_s$ has $p$-rank strictly less than $f$.  Then either
$S^{<f}$ is empty or $S^{<f}$ is pure of codimension one.
\end{lemma}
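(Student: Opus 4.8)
The plan is to reinterpret the $p$-rank as the fibrewise order of an explicit quasi-finite $S$-scheme, and then to exhibit $S^{<f}$ as the zero locus of a single ``discriminant''-type function, so that Krull's principal ideal theorem forces codimension one. Since the assertion is local on $S$, I may assume $S=\spec A$ with $A$ a Noetherian domain, and, by standard reductions (spreading out to a subalgebra of finite type over $\ff_p$, then normalising, which is finite and hence preserves dimension), that $A$ is normal, so $S$ is regular in codimension one. By the lower semicontinuity of the $p$-rank the locus $S^{<f}$ is closed, and it is a \emph{proper} closed subset because the generic point of $S$ has $p$-rank exactly $f$; what remains is to show that every irreducible component of $S^{<f}$ has codimension at most one.

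First I would record the Frobenius data. Write $\pi\colon X\ra S$ for the structure morphism, let $\mathcal H=R^1\pi_*\mathcal O_X$ (locally free of rank $g$), and let $\phi$ be the $\sigma$-semilinear Hasse--Witt operator on $\mathcal H$ induced by the absolute Frobenius of $X$. The classical Dieudonné-theoretic description of the $p$-rank gives, for each $s\in S$, that $f(X_s)$ equals the rank of the $\kappa(s)$-linear map $(\phi^{g})_s$ on $\mathcal H\otimes\kappa(s)$ (the iterate $\phi^{n}$ having stable rank for $n\ge g$). Equivalently, $p^{f(X_s)}$ is the order of the fibre over $s$ of the affine $S$-scheme $\mathcal G:=\underline{\hom}_S(\mmu_p,X)$, which is quasi-finite, separated, and unramified over $S$, with geometric fibres the $\ff_p$-vector spaces $\hom_{\bar\kappa(s)}(\mmu_p,X_s)\iso(\ff_p)^{f(X_s)}$. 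Thus $S^{<f}=\{s:\#\mathcal G_s<p^{f}\}$, and $\mathcal G\ra S$ restricts to a finite étale cover of degree $p^{f}$ over the dense open $S\setminus S^{<f}$. The one delicate feature is that $\mathcal G\ra S$ is \emph{not} flat: its non-flat locus is precisely $S^{<f}$, which is the very thing we must bound.

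From here the strategy is to compactify and take a discriminant. By Zariski's Main Theorem, $\mathcal G$ is a dense open subscheme of a finite $S$-scheme $\bar{\mathcal G}$, still of generic degree $p^{f}$ over $S$; over the open $S^\flat\subseteq S$ where $\bar{\mathcal G}$ is $S$-flat (hence finite locally free of degree $p^{f}$; $S^\flat$ contains every codimension-one point of the normal scheme $S$), one checks that $S^{<f}\cap S^\flat=V(\disc(\bar{\mathcal G}/S))\cup(\text{image of }\bar{\mathcal G}\setminus\mathcal G)$. The ramification divisor $V(\disc(\bar{\mathcal G}/S))$ is the zero locus of a section of a line bundle, nonzero since $\mathcal G$ is generically étale, hence empty or pure of codimension one by Krull's theorem. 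I expect the \textbf{main obstacle} to be controlling the second piece — the image of the boundary $\bar{\mathcal G}\setminus\mathcal G$ — and, relatedly, the residual non-flat locus: a priori neither is divisorial. Indeed the competing naive description of $S^{<f}$, as the determinantal locus where the Hasse--Witt matrix product $A\,A^{(p)}\cdots A^{(p^{g-1})}$ has rank $<f$, only bounds $\codim_S S^{<f}$ by $(g-f+1)^2$, which is far too weak once $f<g$. Forcing the codimension down to exactly one requires the special structure of the Frobenius — for instance, unwinding $\phi$ through the Hodge and conjugate filtrations on $H^1_{\mathrm{dR}}(X/S)$ so that each rank drop is recognised as the failure of transversality of two rank-$g$ subbundles of a rank-$2g$ bundle, i.e. again the vanishing of a single discriminant — which is the technical heart of \cite{oort74}. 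Granting it, patching over the codimension-$\ge 2$ loci that were set aside is routine, and the proof is complete.
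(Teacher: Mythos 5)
The paper itself offers no argument for this lemma --- it simply cites \cite[Lemma 1.6]{oort74} --- so there is no ``paper's own proof'' to match you against; what must be evaluated is whether your write-up constitutes a proof on its own terms. It does not, and the gap is one you yourself point to and then do not close. Your setup is sound: passing to a Noetherian normal affine base, introducing $\mathcal G=\underline{\hom}_S(\mmu_p,X)$ (equivalently the fixed-point scheme of the Hasse--Witt $\phi$, cut out by $g$ equations $x_i=\sum_j a_{ij}x_j^p$ in $\mathbb A^g_S$), noting that $\mathcal G\to S$ is unramified, quasi-finite and separated with $\#\mathcal G_{\bar s}=p^{f(X_s)}$, and compactifying via Zariski's Main Theorem. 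You also correctly observe that the naive determinantal description of $S^{<f}$ via the Hasse--Witt matrix only gives a codimension bound of $(g-f+1)^2$, which is useless once $f<g$, so some further input is required. The difficulty is exactly where you flag it: on the flat locus $S^\flat$ of $\bar{\mathcal G}$ you get $S^{<f}\cap S^\flat=V(\disc(\bar{\mathcal G}/S))\cup\pi(\bar{\mathcal G}\setminus\mathcal G)$, the first piece is divisorial, but the second piece --- the image of the boundary --- has no \emph{a priori} codimension bound. (A concrete illustration with $g=1$, $S=\spec k[t]_{(t)}$, Hasse--Witt matrix $(t)$: the component $V(1-tx^{p-1})$ of $\mathcal G$ is a single point of $\mathbb A^1_S$ lying over the generic point only, escaping to infinity in $\mathbb P^1_S$ as $t\to0$; in higher dimension the image of such ``escaping'' boundary components need not be a divisor.) At that moment your text says the resolution ``is the technical heart of \cite{oort74}'' and concludes ``Granting it, \dots the proof is complete.'' That is circular: you are citing the very result you set out to reprove. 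The final sentence about recognising rank drops as failures of transversality of two rank-$g$ subbundles of $H^1_{\mathrm{dR}}$ is only obviously a single-determinant condition for the \emph{first} drop (from $p$-rank $g$ to $<g$, i.e.\ non-transversality of the Hodge and conjugate filtrations); you give no mechanism for iterating it to detect the drop from $f$ to $<f$ when $f<g$.

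So the verdict is: a well-organised plan with an honest diagnosis of its own obstruction, but not a proof. To repair it along the lines you sketch, you would need an actual argument that the boundary contribution is divisorial, and the natural place to look is structure you have not yet exploited: $\mathcal G\to S$ is a \emph{group} scheme, and because it is unramified the identity section $e(S)\subset\mathcal G$ is open and closed, which constrains both the component structure of $\mathcal G$ and (after translating by sections over an étale cover) the way components can fail to be finite over $S$. Alternatively, one can avoid $\mathcal G$ entirely and argue inside the Dieudonn\'e/de Rham picture by producing a decreasing chain of subbundles on which $\phi$ acts, with each successive rank drop governed by a single determinant --- this is much closer in spirit to what Oort actually does, and it is precisely the step you have labelled the ``technical heart'' and left blank. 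Until one of those routes is carried through, the codimension-one statement is asserted, not proved.
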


There are analogous semicontinuity and purity results for Newton
polygons, although the former requires more notation to state, and the
proof of the latter is much deeper than that of Lemma \ref{lempurityprank}.

The partial ordering on Newton polygons is defined as follows. Let
\[\nu = \st{\lambda_1^{\oplus m_{\lambda_1}}, \ldots, \lambda_r^{\oplus
m_{\lambda_r}}},\]
where $\lambda_i < \lambda_{i+1}$.  Let
$\Gamma(\nu) \subset {\mathbb R}^2$ be the convex hull of $(0,0)$ and
\[\st{(\sum_{i=1}^j m_{\lambda_i}c_{\lambda_i}, \sum_{i=1}^j
  m_{\lambda_i}a_{\lambda_i}): 1 \le j \le r}.\]  
%(One might plausibly
%argue that $\Gamma(\nu(G))$ is what should be called the Newton
%polygon of $G$.)  
If $\mu$ and $\nu$ are two Newton polygons, we will
write $\mu \preceq \nu$ if $\Gamma(\mu)$ and $\Gamma(\nu)$ have the
same endpoints and if all points of $\Gamma(\mu)$ lie on or above
those of $\Gamma(\nu)$.  (This convention may seem a little
surprising, but it has the pleasant consequence that ``smaller''
Newton strata are in the closures of ``larger'' ones.)  See Figure
\ref{figposetg4} for the $g=4$ case of  the poset of admissible symmetric
Newton polygons.

Newton polygons have the following semicontinuity property \cite[Theorem
2.3.1]{katzsf}: Let $S = \spec(R)$ be the spectrum of a local ring,
with geometric generic point $\bar\eta$ and geometric closed point
$\bar s$.  If $G$ is a $p$-divisible group over $S$, then $\nu(G_{\bar
  s}) \preceq \nu(G_{\bar \eta})$.  Like the $p$-rank, the Newton
polygon is a discrete invariant which changes (if at all) in
codimension one.

\begin{proposition}
\label{propdejongoort}
\cite{dejongoort}
Let $S$ be an integral, excellent scheme.  Let $G \ra S$ be a
$p$-divisible group over $S$, and let $U\subset S$ be the largest
dense set on which the Newton polygon is constant.  Then either $U =
S$ or $\codim((S\setcomp U), S) = 1$.
\end{proposition}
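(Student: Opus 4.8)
The plan is to use the Grothendieck--Katz specialization theorem to reduce to a local statement, and then to analyze the degeneration of the Dieudonn\'e crystal of $G$ along the lines of \cite{dejongoort}. Let $\nu_0$ be the Newton polygon of $G$ at the generic point of $S$. By the specialization theorem recalled above, $\nu(G_s)\preceq\nu_0$ for every $s\in S$ and each $\{s:\nu(G_s)\preceq\mu\}$ is closed; hence $U=\{s:\nu(G_s)=\nu_0\}$ is open, it is the largest dense locus of constancy, and one must show that the closed set $Z:=S\setcomp U$ is empty or pure of codimension one. This can be tested at the local ring of $S$ at the generic point $\eta$ of a component of $Z$, and $\eta$ lies on no other component of $Z$; since localizing, completing, quotienting by a minimal prime, normalizing, and enlarging the residue field all preserve the Newton polygon fibrewise and send punctured spectra to punctured spectra, it suffices to prove: \emph{if $R$ is a complete Noetherian local normal domain with algebraically closed residue field $k$ and $\dim R\ge 2$, and the Newton polygon of $G$ is constant, equal to $\nu_0$, on $\spec R\setcomp\{\mathfrak m\}$, then $\nu(G_k)=\nu_0$.}

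I would then cut the problem down to a single ``jump''. A Newton polygon $\nu\preceq\nu_0$ is determined by the finitely many values $\nu(x)$ at the breakpoint abscissae $x$ occurring among such polygons; each function $s\mapsto\nu(G_s)(x)$ is upper semicontinuous, and $\nu(G_s)\ne\nu_0$ exactly when $\nu(G_s)(x)>\nu_0(x)$ for some such $x$. Thus $Z=\bigcup_x Z_x$ with $Z_x:=\{s:\nu(G_s)(x)>\nu_0(x)\}$ closed, and since a component of a finite union of closed sets is a component of one of the terms, it is enough to bound $\codim(Z_x,S)$ for each $x$. Passing to the exterior power $\wedge^x\mathbb D(G)$ of the Dieudonn\'e crystal, whose smallest slope at $s$ equals the sum of the $x$ smallest slopes of $\mathbb D(G_s)$, namely $\nu(G_s)(x)$, this reduces to a purity statement for $F$-crystals: \emph{the locus over an integral excellent base where the smallest slope of an $F$-crystal jumps above its generic value is empty or pure of codimension one}. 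This generalizes Lemma \ref{lempurityprank} (the case of $\wedge^{f}$ of the Dieudonn\'e crystal of a $p$-divisible group, with $f$ its generic $p$-rank); the general case is the content of \cite{dejongoort}.

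To prove the $F$-crystal purity I would, after the same reductions, work over a complete local normal domain $R$ with $\dim R\ge 2$ on whose punctured spectrum the Newton polygon is constant. By the slope filtration theorem for $F$-crystals (cf.\ \cite{katzsf}), after a finite surjective cover the $F$-crystal over $\spec R\setcomp\{\mathfrak m\}$ carries an $F$-stable filtration with isoclinic graded pieces, and by normality (Hartogs) the $F$-crystal and its Frobenius extend across $\mathfrak m$. The heart of the matter --- and the step I expect to be the main obstacle --- is to control this slope filtration at the closed point: one must show that the smallest-slope sub-$F$-crystal extends not merely as a coherent subsheaf but as a saturated, still isoclinic, sub-object at $\mathfrak m$, for then the $F$-isocrystal at $k$ acquires a sub-object of the predicted slope and rank, which together with the a priori bound $\nu(G_k)\preceq\nu_0$ forces $\nu(G_k)=\nu_0$. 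Carrying this out --- descending through the finite cover, handling the non-perfect residue fields on the punctured spectrum, and controlling the filtration against the integral lattice --- is the substance of \cite{dejongoort}, and it is precisely here that the hypothesis $\dim R\ge 2$ is used.
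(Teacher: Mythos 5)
This proposition is a recalled result in the paper, stated with a citation to \cite{dejongoort} and no proof, so there is no in-paper argument to compare against. Your outline assembles the standard reductions correctly: localize at a generic point of the jump locus; reduce to a complete Noetherian local normal domain $R$ with $\dim R\ge 2$ and algebraically closed residue field; isolate a single breakpoint $x$ via the upper semicontinuity of $s\mapsto\nu(G_s)(x)$; and pass to $\wedge^x$ of the Dieudonn\'e crystal so that purity for the full Newton polygon becomes purity for the smallest Frobenius slope. You also correctly identify the crux: after invoking a slope filtration on the punctured spectrum (possibly after a finite cover), one must show that the minimal-slope sub-object extends across the closed point as a \emph{saturated, still isoclinic} sub-$F$-crystal, so that the isocrystal at $k$ acquires a sub-object of the predicted slope and rank, which together with the a priori bound $\nu(G_k)\preceq\nu_0$ forces equality.

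However, you explicitly defer that central step to \cite{dejongoort}, and it is the entire mathematical content of the theorem; the preceding reductions are routine and do not use $\dim R\ge 2$ at all. The Hartogs-type extension you invoke gives a coherent, $F$-stable extension of the sub-crystal, but says nothing about saturation or isoclinicity at the closed point, and it is precisely the possible failure of these that would allow the Newton polygon to jump --- no argument is offered for why it cannot. So what you have written is an accurate roadmap of the de Jong--Oort strategy rather than a proof. For a citation-only proposition in a survey this is a reasonable response, but it should be presented as a sketch (with the key lemma attributed to \cite{dejongoort}), not as a self-contained proof.
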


\subsection{Notation on stratifications and Newton polygons}

Let $X \ra S$ be any family of abelian varieties of relative dimension
$g$ in positive
characteristic.  For any Newton polygon $\nu$, let $S^\nu$ be the
reduced subspace such that $s \in S^\nu$ if and only if the Newton
polygon of $X_s$ is $\nu$.  In general, the Newton stratification
refines the $p$-rank stratification $S = \sqcup S^f$, where $s\in S^f$
if and only if the $p$-rank of $X_s$ is $f$.  

If $C \ra S$ is a family of smooth, projective curves, then the Newton polygon
and $p$-rank strata of $S$ are those corresponding to the relative
Jacobian $\jac(C) \ra S$.

Suppose that $S$ is irreducible and $X \ra S$ is a family of abelian
varieties.  Since there are only finitely many symmetric admissible Newton polygons of any
particular height, by semicontinuity there is a nonempty open
subset $U$ over which the Newton polygon of $X$ is constant.   We call
the Newton polygon of (any geometric fiber of) $X_U \ra U$ the generic
Newton polygon of $X \ra S$, or simply of $S$ if the family of abelian
varieties is clear from context.  

\begin{definition}
\label{defnugf}
For each $0 \le f \le g$, define a Newton polygon $\nu_g^f$ as follows:
\begin{align*}
\nu_g^g &= \st{ 0^{\oplus g}, 1^{\oplus g}} \\
\nu_g^0 &= \begin{cases}
\st{\half 1 ^{\oplus g}} & g \le 2 \\
\st{\oneover g, \frac{g-1}{g}} & g \ge 3
	     \end{cases}
\\
\intertext{and if $0 < f < g$, set}
\nu_g^f &= \nu_f^f \oplus \nu_{g-f}^0 .
\end{align*}
This is the largest (``most generic'') admissible symmetric Newton polygon of height $2g$
and $p$-rank $f$.  Similarly, let
\[
\sigma_g = \st{\half 1 ^{\oplus_g}}
\]
be the supersingular Newton polygon of height $2g$.
\end{definition}

With this
notation, we depict the poset of Newton polygons for $g=4$ in Figure \ref{figposetg4}.

\begin{figure}
\begin{center}
\begin{tikzpicture}[xscale=2]
\node (sigma4) at (0,0) {$\sigma_4$};
\node (nu30sigma1) at (1,0) {$\nu_3^0\oplus \sigma_1$};
\node (nu40) at (2,-1) {$\nu_4^0$};
\node (nu11sigma3) at (2,1) {$\nu_1^1\oplus \sigma_3$};
\node (nu11nu30) at (3,0) {$\nu_1^1\oplus \nu_3^0$};
\node (nu42) at (4,0) {$\nu_4^2$};
\node (nu43) at  (5,0) {$\nu_4^3$};
\node (nu44) at  (6,0) {$\nu_4^4$};
\draw [->] (sigma4) -- (nu30sigma1);
\draw [->] (nu30sigma1) -- (nu11sigma3);
\draw [->] (nu30sigma1) -- (nu40);
\draw [->] (nu11sigma3) -- (nu11nu30);
\draw [->] (nu40) -- (nu11nu30);
\draw [->] (nu11nu30) -- (nu42);
\draw [->] (nu42) -- (nu43);
\draw [->] (nu43) -- (nu44);
\end{tikzpicture}
\caption{ Symmetric admissible Newton polygons of height $8$.  There is
a path from $\nu$ to $\nu'$ if and only if $\nu \prec \nu'$.}\label{figposetg4}
\end{center}
\end{figure}
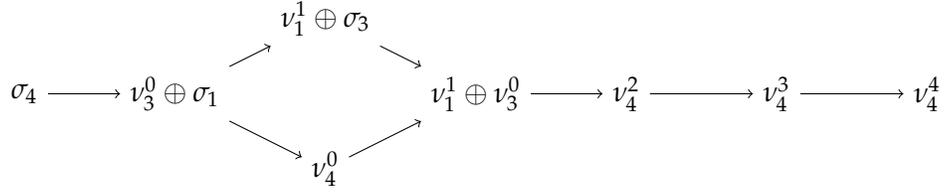

For a symmetric admissible Newton polygon $\nu$ of height $2g$, let
$\codim(\nu)$ be the length of any path from $\nu_g^g$ to $\nu$ in
the poset of all such Newton polygons.

\section{Stratifications on the moduli space of abelian varieties}
\label{secag}

The $p$-rank and Newton stratifications of $\stk A_g$ are  well
understood.   We review some of their features here, for two different reasons.

First, for $1 \le g \le 3$, $\stk M_g \inject \stk A_g$ is an open immersion, and $\stk
M_g^* \ra \stk A_g$ is an isomorphism.  Consequently, in small genus,
the stratifications on $\stk A_g$ are perfectly mirrored in $\stk
M_g$.   This information is used directly below.

Second, as we shall see, $\stk A_g$ is in some sense more highly structured than
$\stk M_g$.  Thus, the results described here for $\stk A_g$ might
elicit some of the most optimistic conjectures one might make about such stratifications
on $\stk M_g$.  (Indeed, some ``obvious'' conjectural statements are
too strong to be true; see Section \ref{subsecnonexistence}.)

\subsection{The $p$-ranks of abelian varieties}

\begin{theorem}
\cite{normanoort}
\label{thnormanoort}
Let $g \ge 1$ and $0 \le f \le g$.  Then $\stk A_g^f$ is nonempty
and pure of codimension $g-f$ in $\stk A_g$.
\end{theorem}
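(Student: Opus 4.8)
The statement is that $\stk A_g^f$ is nonempty and pure of codimension $g-f$ in $\stk A_g$. The plan is to separate the two assertions. For nonemptiness, I would exhibit explicit points: take $E_{\rm ord}$ an ordinary elliptic curve and $E_{\rm ss}$ a supersingular one over $\bar\ff_p$, and form $X = E_{\rm ord}^f \times E_{\rm ss}^{g-f}$ with its product principal polarization. Since the $p$-rank is additive on products (which follows from $m_{G\oplus H}(\lambda) = m_G(\lambda)+m_H(\lambda)$, hence $f_{X\times Y} = f_X + f_Y$), this abelian variety has $p$-rank exactly $f$, so $\stk A_g^f \neq \emptyset$. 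An alternative, if one wants a characteristic-free uniform construction, is to use the Newton polygon $\nu_g^f$ from Definition \ref{defnugf}, which realizes $p$-rank $f$; but the product of elliptic curves is the cleanest witness.

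For the codimension statement, the natural approach is via the local structure of the universal deformation. Fix a geometric point $[X_0]\in\stk A_g^f$ with $X_0$ having $p$-rank exactly $f$; it suffices to show that in the local deformation ring of $(X_0,\lambda_0)$ at such a point, the locus where the $p$-rank stays $\ge f$ (equivalently $=f$) has codimension $g-f$, and in fact that the $p$-rank $f$ stratum is there, so that $\dim \stk A_g^f = \dim\stk A_g - (g-f) = g(g+1)/2 - (g-f)$. The key input is the theory of Dieudonné modules / displays: by Serre–Tate and Grothendieck–Messing, deformations of $(X_0,\lambda_0)$ are controlled by deformations of the principally polarized $p$-divisible group $X_0[p^\infty]$, and one reduces to a statement about deformations of $BT_1$'s (or of the truncated display). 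Decompose $X_0[p^\infty]$ up to isogeny, or rather decompose the polarized $p$-divisible group into its étale, multiplicative, and local-local parts: the $p$-rank is the height of the étale (equivalently the multiplicative) part, so it is constant on the deformation exactly along the sublocus where the multiplicative/étale part does not grow or shrink. One then computes the dimension of the closed formal subscheme of the deformation space cut out by requiring $\hom(\mmu_p, X[p])$ to have dimension $\ge f+1$, and shows it has codimension exactly one more than the $p$-rank $f+1$ locus — chaining these gives codimension $g-f$ for $p$-rank $f$. Combined with Lemma \ref{lempurityprank} (purity: the $p$-rank drops in codimension one), the inductive bookkeeping forces each stratum $\stk A_g^f$ to be pure of codimension $g-f$: purity gives that $\stk A_g^{<g}$ is pure of codimension one, and applied iteratively (restricting to a component of $\stk A_g^{f+1}$ and using that the generic $p$-rank there is $f+1$) one descends, getting that $\stk A_g^{\le f}$ has each component of codimension $g-f$, while the deformation-theoretic lower bound shows $\stk A_g^f$ is nonempty of at least that dimension in each such component.

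I expect the main obstacle to be the purity/dimension count at the level of deformation theory: Lemma \ref{lempurityprank} by itself only says the bad locus is \emph{either empty or} pure of codimension one in a given family, so one must independently verify it is nonempty at the right places and that no component of $\stk A_g^f$ has excess codimension. Ruling out excess codimension is precisely where one needs the explicit deformation-theoretic computation (the local $p$-rank stratum has the expected dimension), rather than a soft argument. The additivity of $p$-rank on products, the Serre–Tate/Grothendieck–Messing identification of the deformation space, and an honest dimension count for the locus $\{\dim\hom(\mmu_p, X[p]) = f\}$ inside the $g(g+1)/2$-dimensional deformation space are the three technical pillars; the first is immediate, the second is standard, and the third is the real content. (Of course this is exactly the theorem of Norman and Oort \cite{normanoort}, whose original proof proceeds along these lines using displays/Dieudonné theory.)
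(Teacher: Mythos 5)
The paper does not prove this statement; it cites it to Norman--Oort and offers only the one-sentence observation that nonemptiness is witnessed by $E_{\rm ord}^f \times E_{\rm ss}^{g-f}$ with the product polarization. Your nonemptiness argument is exactly that witness, with the same additivity justification, so that part matches. Your sketch of the codimension half is a fair reconstruction of the Norman--Oort strategy (Serre--Tate, displays, local computation to rule out excess codimension, chained with purity), and you correctly identify the key subtlety the paper also flags implicitly by citing out: Lemma~\ref{lempurityprank} alone only gives ``empty or codimension one,'' so a soft argument cannot conclude.

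One small inaccuracy: inside the universal deformation of a fixed $X_0$ of $p$-rank $f$, the locus where $\dim\hom_{\bar K}(\mmu_p, X[p]) \ge f+1$ is \emph{open}, not closed, by lower semicontinuity of $p$-rank; the closed locus is where the $p$-rank stays equal to $f$, i.e.\ the complement of the $p$-rank-$\ge f+1$ locus. This does not change the structure of the inductive dimension count, but the roles of closed and open are reversed from what you wrote. Aside from that slip, your outline is consistent with the cited reference and somewhat more detailed than what the paper itself supplies.
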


The nonemptiness is easy to see; it suffices to take a product of $f$
ordinary elliptic curves and $g-f$ supersingular elliptic curves,
equipped with the product principal polarization.

\begin{corollary}  Let $g \ge 1$ and $0 \le f \le g$.  Let $S$ be an
  irreducible component of $\stk A_g^f$.
\begin{alphabetize}
\item If $f< g$, then $S$ is in the closure of $\stk A_g^{f+1}$.
\item If $f> 0$, then the closure of $S$ contains an irreducible
  component of $\stk A_g^{f-1}$.
\end{alphabetize}
\end{corollary}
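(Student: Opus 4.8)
The plan is to deduce both statements from the purity result in Theorem~\ref{thnormanoort} together with a dimension count. First I would recall that $\stk A_g^f$ is pure of codimension $g-f$ in $\stk A_g$, and that $\stk A_g^g$ is open and dense (being the ordinary locus). For part (a), let $S$ be an irreducible component of $\stk A_g^f$ with $f<g$. Consider the closure $\bar S$ inside $\stk A_g$ and the open locus $V = \stk A_g \setcomp \overline{\stk A_g^{\le g-1}}$ where... actually the cleaner route is: by semicontinuity of the $p$-rank, $\stk A_g^{\ge f} := \stk A_g^f \cup \stk A_g^{f+1} \cup \cdots \cup \stk A_g^g$ is open in $\stk A_g$, and $\stk A_g^f$ is closed inside it. Apply Lemma~\ref{lempurityprank} (purity) to the universal abelian variety over each irreducible component $T$ of $\stk A_g^{\ge f+1}$ that has $S$ in its closure: the locus of $p$-rank $<f+1$ in $\bar T$ is either empty or pure of codimension one. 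Since $\dim S = \dim \stk A_g - (g-f)$ and $\dim T \le \dim \stk A_g - (g-f-1)$, and by purity any component of $\stk A_g^{f}$ in $\bar T$ has codimension exactly one in $\bar T$, a component of $\stk A_g^{\ge f+1}$ whose closure meets $S$ in codimension one must have $S$ in its closure; one then checks that \emph{some} component of $\stk A_g^{f+1}$ (not merely of $\stk A_g^{\ge f+1}$) does the job, using that $\overline{\stk A_g^{f+1}} \supseteq \stk A_g^f$ will follow once we know every component of $\stk A_g^f$ is a codimension-one degeneration within the bigger stratum.

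An alternative and perhaps more robust approach for part (a) is to argue directly with a chain of elliptic-curve products as in the nonemptiness proof, together with deformation theory: near a point $[E_1 \times \cdots \times E_g]$ with exactly $g-f$ supersingular factors, one can deform one supersingular factor to an ordinary one, staying inside $\stk A_g$, which realizes any given component of $\stk A_g^f$ abstractly but not obviously a \emph{prescribed} component $S$. So instead I would lean on the following structural input, which is standard for $\stk A_g$: the $p$-rank strata of $\stk A_g$ satisfy the ``nesting'' property because the boundary of the minimal ($p$-rank $0$, indeed supersingular) locus is well understood and every $p$-rank stratum specializes to it. Concretely, for part (b), let $S$ be a component of $\stk A_g^f$ with $f>0$; then $\bar S$ is a closed subvariety of dimension $\dim\stk A_g - (g-f) > \dim \stk A_g - g = \dim \stk A_g^0$, so $\bar S \not\subseteq \stk A_g^f$, i.e. $\bar S$ meets $\stk A_g^{<f}$. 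By purity (Lemma~\ref{lempurityprank}) applied to the universal family over $S$, the locus $S^{<f} \subseteq \bar S$ is pure of codimension one in $\bar S$, hence has dimension $\dim \stk A_g - (g-f) - 1 = \dim \stk A_g - (g-(f-1))= \dim \stk A_g^{f-1}$. Since $S^{<f} \subseteq \stk A_g^{<f}$ but has the dimension of a full component of $\stk A_g^{f-1}$, and $\stk A_g^{f-1}$ is pure of codimension $g-f+1$ while $\stk A_g^{<f-1}$ has strictly smaller dimension, each irreducible component of $S^{<f}$ of top dimension must be an irreducible component of $\stk A_g^{f-1}$. Thus $\bar S$ contains a component of $\stk A_g^{f-1}$, proving (b).

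For part (a) I would then run the symmetric argument upward: let $S$ be a component of $\stk A_g^f$, $f<g$. By semicontinuity $\stk A_g^{\ge f}$ is open, $S$ is closed in it, and $S$ is not open in $\stk A_g^{\ge f}$ (its codimension there is one, since $\stk A_g^{\ge f+1}$ is dense in $\stk A_g^{\ge f}$ by purity applied to the generic points of components of $\stk A_g^{\ge f}$). Hence $S$ lies in the closure of $\stk A_g^{\ge f+1}$; let $T$ be a component of $\stk A_g^{\ge f+1}$ with $S \subseteq \bar T$. Applying Lemma~\ref{lempurityprank} to the universal family over $T$ shows $T^{<f+1}$ is pure of codimension one in $\bar T$; since $S$ has codimension one in $\bar T$ (dimensions: $\dim \bar T = \dim\stk A_g - (g-f-1)$, $\dim S = \dim \stk A_g - (g-f)$) and $S \subseteq T^{<f+1}$, $S$ is a component of $T^{<f+1}$. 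Finally $T$ itself, being irreducible with generic $p$-rank $\ge f+1$, has generic $p$-rank exactly $f+1$ (its generic point cannot have $p$-rank $>f+1$ or it would lie in a proper closed subset, contradicting that $T$ has the maximal dimension $\dim\stk A_g-(g-f-1)$ among $p$-rank-$\ge f+1$ loci meeting... ), so $T \subseteq \overline{\stk A_g^{f+1}}$ and $S \subseteq \bar T \subseteq \overline{\stk A_g^{f+1}}$, which is (a).

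\textbf{Main obstacle.} The delicate point is the bookkeeping that upgrades ``$S$ is contained in the closure of the $p$-rank-$\ge f\pm 1$ \emph{locus}'' to ``$S$ is contained in the closure of a component of the $p$-rank-\emph{exactly}-$(f\pm1)$ stratum,'' and, for part (b), that the codimension-one degeneration $S^{<f}$ genuinely sweeps out a \emph{full} component of $\stk A_g^{f-1}$ rather than landing in some lower stratum or in a proper subvariety of a component. This is handled entirely by the purity statements (Lemma~\ref{lempurityprank} and the codimension counts forced by Theorem~\ref{thnormanoort}), so no genuinely new idea is needed, but the dimension comparisons must be written carefully.
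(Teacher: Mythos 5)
Your plan for part (a) --- purity plus the dimension count from Theorem~\ref{thnormanoort} --- is exactly the route the paper takes, and with correct bookkeeping it works. But the bookkeeping in your final paragraph is off: you take $T$ to be a component of $\stk A_g^{\ge f+1}$, which (being open in the irreducible $\stk A_g$) is all of $\stk A_g^{\ge f+1}$, so $\bar T = \stk A_g$ and $\dim\bar T = \dim\stk A_g$, not $\dim\stk A_g - (g-f-1)$; consequently $S$ has codimension $g-f$ in $\bar T$, not one. Likewise $T^{<f+1}$ is empty by the very definition of $T$, and a single application of Lemma~\ref{lempurityprank} to the universal family over $T$ controls $T^{<g}$, not $T^{<f+1}$. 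The correct version iterates purity to show, by descending induction on $f'$, that $\stk A_g^{\le f'}$ is pure of codimension $g-f'$ and every component has generic $p$-rank exactly $f'$; then $S\subseteq\stk A_g^{\le f+1}=\bar{\stk A_g^{f+1}}$ follows at once.

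The serious gap is in part (b). The step ``$\dim\bar S = \dim\stk A_g - (g-f) > \dim\stk A_g^0$, so $\bar S\not\subseteq\stk A_g^f$'' is a non sequitur: a dimension inequality between $\bar S$ and the $p$-rank-zero locus says nothing about whether $\bar S$ contains any point of $p$-rank $< f$. What you need is that $S$ is not already closed in $\stk A_g$, i.e.\ that some point of $S$ genuinely specializes inside $\stk A_g$ to an abelian variety of smaller $p$-rank, and no dimension count produces such a specialization. This is precisely why, in the $\stk M_g$ analogue (Corollary~\ref{PnestedMg}(b)), the paper does not attempt a dimension argument: it produces the degeneration explicitly by clutching, using the boundary of $\barstk M_g$ and the fact that $\stk M_1^1$ specializes to a supersingular elliptic curve. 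No such elementary degeneration is available in $\stk A_g$, and the paper is explicit that it knows no elementary proof of (b); instead it cites Theorem~\ref{thchaioort} (Chai--Oort), which supplies the irreducibility of the $p$-rank strata and the closure relations among Newton strata. Granting the missing specialization, the rest of your part (b) --- purity forces codimension one, the dimension matches $\stk A_g^{f-1}$, and purity of $\stk A_g^{\le f-2}$ rules out lower strata --- is correct; but the missing step is exactly the non-elementary input, so the purely dimension-theoretic route cannot close part (b).
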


\begin{proof}
Part (a) follows from the dimension count of Theorem
\ref{thnormanoort} and purity for $p$-ranks (Lemma
\ref{lempurityprank}).

We do not know an elementary proof of part (b), although it is an
immediate consequence of Theorem \ref{thchaioort}.
\end{proof}

Note that, unless $(g,f) = (1,0)$ or $(2,0)$, Theorem \ref{thchaioort}
implies that $\stk A_g^f$ is actually irreducible.

\subsection{Newton polygons of abelian varieties}

Thanks especially to work of Oort and of Chai and Oort, the Newton
stratification on $\stk A_g$ is well-understood.

\begin{theorem}
\cite{chaioortannals11}
\label{thchaioort}
Let $\nu$ be a symmetric admissible Newton polygon of height $2g$.
\begin{alphabetize}
\item The stratum $\stk A_g^\nu$ is nonempty and has codimension
  $\codim(\nu)$ in $\stk A_g$.
\item If $\nu \not = \sigma_g$, then $\stk A_g^\nu$ is irreducible.
\item The supersingular locus $\stk A_g^{\sigma_g}$ is connected but
  reducible.
\end{alphabetize}
\end{theorem}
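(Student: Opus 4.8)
All three parts are deep theorems about $p$-divisible groups over $\bar\ff_p$-schemes; the plan is to treat them separately, each time reducing to a structural result about the $p$-divisible group rather than the abelian variety.

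\emph{Part (a): nonemptiness and codimension.} Fix a symmetric admissible $\nu$ with slope multiplicities $m_\nu(\lambda)$, and form the $p$-divisible group $H_\nu = \bigoplus_\lambda G_\lambda^{\oplus m_\nu(\lambda)}$ over $\bar\ff_p$ from Manin's classification. Symmetry of $\nu$ endows $H_\nu$ with a principal quasi-polarization, and a standard algebraization argument realizes $H_\nu$, up to isogeny, as the $p$-divisible group of a principally polarized $X/\bar\ff_p$ of dimension $g$, giving a point of $\stk A_g^\nu$; for the special polygons $\nu_g^f$ this is already covered by Theorem \ref{thnormanoort} (or by products of elliptic curves). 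For the codimension I would appeal to Oort's dimension formula. The point is that the $a$-number $1$ locus is open and dense in $\stk A_g^\nu$, and near such a point Grothendieck--Messing theory identifies a neighborhood in $\stk A_g$ with the universal deformation space of the polarized $p$-divisible group; an explicit computation with (Dieudonn\'e) displays then shows that, inside that deformation space, the locus of constant Newton polygon $\nu$ has codimension exactly $\codim(\nu)$, the combinatorial quantity defined above. Purity (Proposition \ref{propdejongoort}) is the qualitative counterpart and can be used to spread equidimensionality over the whole stratum once the generic codimension is known.

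\emph{Part (b): irreducibility for $\nu \ne \sigma_g$.} Here the plan follows Oort's foliation and Chai--Oort's monodromy theorem. Through a point $x \in \stk A_g^\nu$ one has the central leaf $\stk C(x)$ (where the polarized $p$-divisible group is geometrically isomorphic to that of $x$) and the isogeny leaf $\stk I(x)$, and the ``almost product'' structure theorem says that, up to a finite surjective correspondence, $\stk A_g^\nu$ is $\stk C(x) \times \stk I(x)$. Thus it suffices to show central leaves and isogeny leaves are irreducible. Irreducibility of central leaves is the crux: one proves that the prime-to-$p$ Hecke orbit of $x$ is dense in $\stk C(x)$, and then transitivity of the (finite \'etale) prime-to-$p$ Hecke correspondences on $\stk C(x)$ forces irreducibility. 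The density statement --- the Hecke orbit problem --- is itself obtained by combining a local rigidity statement (the formal completion of a central leaf is an iterated extension of $p$-divisible formal groups, a ``cascade'', with no unexpected Hecke-stable formal subschemes) with the fact that the $\ell$-adic monodromy of the universal abelian scheme over a central leaf is as large as the polarization permits. For isogeny leaves one uses that they are irreducible precisely when $\nu \ne \sigma_g$; this is where the hypothesis enters, since the supersingular isogeny leaves are exactly the reducible loci of part (c).

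\emph{Part (c): the supersingular locus.} I would follow Li--Oort. Fixing a superspecial principally polarized abelian variety, every principally polarized supersingular $X$ of dimension $g$ is obtained from it as a ``polarized flag-type quotient'', which exhibits $\stk A_g^{\sigma_g}$ as a union of images of towers of $\mathbb{P}^1$- and flag-bundles of dimension $\lfloor g^2/4\rfloor$, indexed by a class set for a quaternion Hermitian form. The number of irreducible components is the associated class number, which exceeds $1$ for every $g \ge 2$, giving reducibility. For connectedness one studies the incidence graph whose vertices are the components and whose edges are the finitely many superspecial points --- each lying on several components --- and shows this graph is connected ($g=2$ is Katsura--Oort, the general case Li--Oort); hence $\stk A_g^{\sigma_g}$ is connected. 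The genuinely hard input in the whole theorem is the irreducibility of central leaves in part (b): proving maximality of the $\ell$-adic monodromy along a leaf and density of prime-to-$p$ Hecke orbits is the substance of Chai--Oort's work and admits no soft proof, and the connectedness claim in part (c) likewise seems to require the explicit Li--Oort geometry rather than a general principle.
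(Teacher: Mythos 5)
The paper gives no proof of this theorem at all: it is stated with a citation to Chai--Oort and is used as a black box throughout, so there is no ``paper's own proof'' to compare your sketch against. That said, your outline is a faithful high-level account of how the result is actually established in the literature. Part (a) is Oort's (nonemptiness of Newton strata via minimal/polarized $p$-divisible groups, and the dimension formula $\codim(\nu)$ via deformation theory of the polarized $p$-divisible group, with purity as the soft companion); part (b) is the substance of the cited Chai--Oort Annals paper (foliation into central and isogeny leaves, the almost-product structure, density of prime-to-$p$ Hecke orbits via the local rigidity of cascades plus maximal $\ell$-adic monodromy); part (c) is Li--Oort (polarized flag-type quotients from a superspecial base point, the class-number count of components, and connectedness via the incidence graph of components and superspecial points, with Katsura--Oort handling $g=2$).

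Two small caveats worth flagging so the sketch does not read as more elementary than it is. First, in (a) the passage from the quasi-polarized $p$-divisible group $H_\nu$ to a \emph{principally} polarized abelian variety with that Newton polygon is genuinely nontrivial (Oort's construction, not a routine algebraization), and the ``$a$-number $1$ is dense'' step is itself a theorem rather than an observation. Second, in (b) the deduction of irreducibility of the full stratum $\stk A_g^\nu$ from irreducibility of central leaves also needs connectedness/irreducibility of the isogeny leaves together with the almost-product structure; it is correct that the supersingular case is where this breaks down (there the central leaves are zero-dimensional and the isogeny leaf is the whole stratum, whose components are indexed by a class number $>1$), but the assertion ``isogeny leaves are irreducible precisely when $\nu\neq\sigma_g$'' should be read as shorthand for that package rather than as a freestanding lemma.
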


In analyzing Newton strata $\stk A_g^\nu$, two key tools are the
Serre-Tate theorem and the action of Hecke operators.  In concert
with Dieudonn\'e theory, the first tool allows one to use
(semi-)linear algebra to study the deformation space of an abelian
variety.  The second tool, and in particular the fact that Newton strata
are stable under this large group of partial symmetries of $\stk A_g$,
allows one to deduce global information about Newton strata.

In general, both of these structures are missing from $\stk M_g$, and
the state of our knowledge is, correspondingly, much cruder.  For a given symmetric admissible Newton polygon $\nu$ of
height $2g$, it is typically not even known if $\stk M_g^\nu$ is nonempty, let
alone pure or even irreducible.  {\em Degeneration} (as in Theorem
\ref{propmgboundary}) and {\em low genus
  phenomena} (e.g., the fact that every principally polarized abelian
threefold is the Jacobian of a stable curve) are among the few tools
we have at our disposal.  In the second half of this paper, we show how these
techniques can be combined to yield information about stratifications
on $\stk M_g$.

\section{The $p$-rank stratification of the moduli space of stable curves}
\label{secmgbar}

Let $\stk M_g/\ff_p$ be the  moduli stack of
smooth proper curves of genus $g$.  Even if one is intrinsically only
interested in smooth curves, one is quickly led, following Deligne and
Mumford, to study $\barstk M_g$, the moduli stack of stable proper curves
of genus $g$ \cite{delignemumford}.  

\subsection{The moduli space of stable curves}
\label{subsecdm}

It turns out that $\stk M_g$ is
open in $\barstk M_g$, which is proper.  The boundary $\del\barstk M_g = \barstk M_g \setcomp \stk M_g$ is
a union $\del\barstk M_g = \cup_{0 \le i \le \floor{g/2}}\Delta_i[\barstk M_g]$,
whose construction we briefly recall.

For a natural number $r$, let $\barstk M_{g;r}$ be the moduli stack of
$r$-labeled stable curves of genus $g$.  There are finite clutching
morphisms 
 %% \begin{diagram}
 %% \barstk M_{g_1;1} \cross \barstk M_{g_2;1} & \rto^{\kappa_{g_1,g_2}} &
 %% \barstk M_{g_1+g_2}\\
 %% \barstk M_{g;2} & \rto^{\kappa_{g}} & \barstk M_{g+1}
 %% \end{diagram}
\[
\xymatrix{
\bar\calm_{g_1;1} \cross \bar\calm_{g_2;1} \ar[r]^{\kappa_{g_1,g_2}} &
\bar\calm_{g_1+g_2}\\
\bar\calm_{g;2} \ar[r]^{\kappa_{g}} & \bar\calm_{g+1}
}
\]
in which the labeled points are identified; see \cite[Section
3]{knudsen2} for more details.   Here, we simply record the following
facts.

Suppose that, for $i \in \st{1,2}$, $(C,P_i)$ is a smooth, pointed
curve with moduli point $s_i  \in \stk M_{g_i;1}(k)$. Then
$\kappa_{g_1,g_2}(s_1,s_2)$ is the moduli point of the curve $C$ of genus
$g_1+g_2$ obtained by identifying $P_1$ and $P_2$.  One has
\begin{equation}
\label{eqpicdeltai}
\pic^0(C) \iso \pic^0(C_1) \oplus \pic^0(C_2).
\end{equation}

Now suppose that $(C,P,Q) \in \stk M_{g;2}(k)$ is a smooth,
$2$-pointed curve.  Then $\kappa_{g}(s)$ is the moduli point of the
curve $\til C$ obtained from $C$ by identfying $P$ and $Q$, and there
is an exact sequence
%% \begin{diagram}
%% \label{diagpicdelta0}
%% 1 & \rto & \gp_m & \rto & \pic^0(\til C) & \rto & \pic^0(C) & \rto &
%% 1,
%% \end{diagram}
\begin{equation}
\label{diagpicdelta0}
 \xymatrix{
 1 \ar[r] & \gp_m \ar[r] & \pic^0(\til C) \ar[r] & \pic^0(C) \ar[r] & 1.
 }
\end{equation}
%whence
%\begin{equation}
%\label{eqprankdelta0}
%f_{\til C} = f_C + 1.
%\end{equation}

Let $\Delta_0[\barstk M_g] =
\kappa_{g-1}(\barstk M_{g-1;2})$, and for  $1 \le i \le g-1$
let $\Delta_i[\barstk M_g] = \kappa_{i,g-i}(\barstk M_{i;1} \cross
\barstk M_{g-i;1})$.   If $S$ is a stack equipped with a morphism $S
\ra \barstk M_g$, we let $\Delta_i[S] = S\cross_{\barstk M_g}
\Delta_i[\barstk M_g]$.  Let $\stk M_g^* = \barstk M_g \setcomp \Delta_0$; this
is precisely the locus of curves whose Picard varieties are actually
abelian varieties, and not merely semiabelian varieties. 
The Torelli
map $\tau: \stk M^*_g \ra \stk A_g$ is a birational morphism onto its
image, and contracts fibers on the boundary.  More precisely, (the Torelli theorem states that) $\tau$ is injective on $\stk M_g$.  On the boundary, $\tau$ forgets the identification point; if $P_1$ and $Q_1$ are points on a curve $C_1$, then $\tau((C_1,P_1),(C_2,P_2)) = \tau((C_1,Q_1),(C_2,P_2))$.

\subsection{The $p$-rank stratification of $\barstk M_g$}
\label{subsecmgbarf}

The notion of $p$-rank makes sense for stable, and not just smooth,
curves. 

On $\Delta_0$ we find, in the notation of \eqref{diagpicdelta0}, that
\begin{align}
\label{eqprankdelta0}
f_{\til C} &= f_{C}+1;
\intertext{while on $\Delta_i$ with $i>0$ we find, in the notation of
\eqref{eqpicdeltai}, that}
\label{eqprankdeltai}
f_C &= f_{C_1} + f_{C_2}.
\end{align}
Thus, the $p$-rank stratification extends to $\barstk M_g$.  We
emphasize that $\barstk M_g^f$ parametrizes those stable curves whose
$p$-rank is $f$, while $\bar{\stk M_g^f}$ is the closure of the set of
smooth curves of genus $g$ and $p$-rank $f$.  In particular, if $f>0$,
one has $\barstk M_g^f \subsetneq \bar{\stk M_g^f}$.

From \eqref{eqprankdelta0}
and \eqref{eqprankdeltai} it immediately follows that
\begin{align*}
\kappa_{g_1,g_2}(\barstk M_{g_1}^{f_1}\cross \barstk M_{g_2}^{f_2})
&\subseteq \barstk M_{g_1+g_2}^{f_1+f_2}\\
\kappa_{g-1}(\barstk M_{g-1}^f) & \subseteq \barstk M_g^{f+1}.
\end{align*}
Faber and van der Geer exploit this structure to show:
\begin{theorem}
\cite{fvdg}
\label{thdimmgf}
Suppose $g \ge 1$ and $0 \le f \le g$.  Then $\stk M_g^f$ is nonempty
and pure of codimension $g-f$ in $\stk M_g$.
\end{theorem}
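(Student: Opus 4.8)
\emph{Structure of the proof.} The assertion has three parts: nonemptiness of $\stk M_g^f$; the upper bound $\codim_{\stk M_g}(\stk M_g^f)\le g-f$; and the lower bound $\codim_{\stk M_g}(\stk M_g^f)\ge g-f$. For $g\le 3$ all three follow from Theorem~\ref{thnormanoort} and the open immersion $\stk M_g\hookrightarrow\stk A_g$, so the real content is the case $g\ge 4$, where the Torelli image is far too small to help. The plan is to run one induction on $g$, with base cases $g\le 3$, proving at the same time the bound $\dim\barstk M_{g;r}^{f}\le 2g-3+f+r$ for all $r$; carrying the pointed and compactified spaces along is forced on us because the clutching maps $\kappa_{g_1,g_2}$ and $\kappa_{g-1}$ mix genera and numbers of marked points.

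\emph{Nonemptiness and the upper bound.} For nonemptiness I would produce curves in the boundary and then smooth them. If $f\ge 1$, then by \eqref{eqprankdelta0} the clutching map $\kappa_{g-1}$ carries the locus of smooth $2$-pointed genus-$(g-1)$ curves of $p$-rank $f-1$ --- nonempty of dimension $2g-4+f$ by induction --- into $\barstk M_g^f$; if $f=0$, one instead glues a supersingular elliptic tail onto a genus-$(g-1)$ curve of $p$-rank $0$ via $\kappa_{1,g-1}$, using \eqref{eqprankdeltai} and the base case of a supersingular elliptic curve. In either case the key step is the \emph{smoothing} statement that such a one-nodal stable curve lies in the closure of the locus of smooth genus-$g$ curves of $p$-rank $f$; since smoothing a node is a one-parameter deformation, this produces a component of $\stk M_g^f$ of dimension $2g-3+f$. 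That \emph{every} component of $\stk M_g^f$ has codimension at most $g-f$ then follows by descending induction on $f$ from Lemma~\ref{lempurityprank}: a component of the closed locus $\stk M_g^{\le f}$ lies in a component $Z'$ of $\stk M_g^{\le f+1}$, which has codimension $\le g-f-1$ by induction; if it is a proper subset of $Z'$, then the generic $p$-rank on $Z'$ equals $f+1$, and Lemma~\ref{lempurityprank} makes the $p$-rank $\le f$ locus inside $Z'$ pure of codimension one. A short semicontinuity argument identifies the closure of a component of $\stk M_g^f$ with a component of $\stk M_g^{\le f}$, so the bound descends.

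\emph{Lower bound.} Suppose some component $W$ of $\stk M_g^f$ had dimension at least $2g-2+f$. Since $2g-2+f>g-2$, Diaz's theorem that a complete subvariety of $\stk M_g$ has dimension at most $g-2$ forces the closure $\bar W\subset\barstk M_g$ to meet the boundary; as $W$ is disjoint from the boundary, $\bar W$ meets some divisor $\Delta_i$ without being contained in it, so $\bar W\cap\Delta_i$ has dimension at least $2g-3+f$. Pulling this locus back along the finite clutching morphism gives a locus of the same dimension inside $\barstk M_{g-1;2}$ (when $i=0$) or inside $\barstk M_{i;1}\times\barstk M_{g-i;1}$ (when $i\ge 1$); by \eqref{eqprankdelta0}, respectively \eqref{eqprankdeltai}, together with lower semicontinuity of the $p$-rank along $\bar W$, it lies in $\barstk M_{g-1;2}^{\le f-1}$, respectively in $\bigcup_{a+b\le f}\barstk M_{i;1}^{a}\times\barstk M_{g-i;1}^{b}$, and the inductive dimension bound caps it at $2g-4+f$ in both cases --- a contradiction. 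With the upper bound, this shows $\stk M_g^f$ is pure of codimension $g-f$ and closes the induction.

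\emph{Main obstacle.} The crux is the smoothing input: one must show that a stable curve of $p$-rank $f$ lying in a boundary divisor of $\barstk M_g$ is a limit of smooth curves of $p$-rank exactly $f$, i.e.\ that the smoothing neither raises nor lowers the $p$-rank below what \eqref{eqprankdelta0} and \eqref{eqprankdeltai} force in the limit. This is a deformation-theoretic question about curves combined with control of the $p$-rank near the boundary, of the type analyzed in \cite{achterpriesprank}. A secondary difficulty is the bookkeeping in the lower-bound induction: it goes through only because the $p$-rank is additive under clutching, which is exactly what keeps the boundary strata strictly below the interior dimension.
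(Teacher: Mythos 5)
The paper gives no proof of this statement; it is cited to Faber and van der Geer \cite{fvdg}, so there is no in-paper argument to compare against. Your sketch is nevertheless in the same spirit as theirs: purity for the $p$-rank (Lemma~\ref{lempurityprank}), boundary degeneration via the clutching maps, and an induction that carries along the pointed compactifications $\barstk M_{g;r}$ because $\kappa_{g_1,g_2}$ and $\kappa_{g-1}$ mix genera and numbers of markings. The lower-bound dimension count is essentially what one needs.

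Two places deserve more care. First, the ``smoothing'' step you flag as the crux is genuinely delicate if you try to prove it directly (a smoothing of a node raises the $p$-rank unless you control it), but it can be \emph{circumvented} rather than proved: work with the closed locus $\barstk M_g^{\le f}$. It is nonempty (trees of elliptic curves, $f$ ordinary and $g-f$ supersingular), and a descending induction on $f$ using Lemma~\ref{lempurityprank} shows every component has codimension $\le g-f$ in $\barstk M_g$, hence dimension $\ge 2g-3+f$. The inductive boundary count (exactly your formulas via \eqref{eqprankdelta0}, \eqref{eqprankdeltai}) bounds the $p$-rank $\le f$ part of $\del\barstk M_g$ by $2g-4+f$. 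So no component of $\barstk M_g^{\le f}$ can lie in the boundary, i.e.\ every component meets $\stk M_g$, and for the same dimension reason its generic $p$-rank is exactly $f$; nonemptiness and the upper bound on codimension follow without ever invoking an explicit smoothing statement. Second, your lower bound on codimension invokes Diaz's theorem to force $\bar W$ to hit the boundary. Diaz's bound was originally proved over $\cx$, and one must check that the version you need is available over $\bar\ff_p$ (or substitute another argument that a component of $\stk M_g^{\le f}$ of dimension $\ge 2g-2+f$ cannot be entirely contained in $\stk M_g$); as written this is a load-bearing step whose validity in characteristic $p$ you should justify or source. A smaller point: the base case $g\le 3$ requires slightly more than ``$\stk M_g\hookrightarrow\stk A_g$ is an open immersion,'' since one must also check that $\stk A_g^f$ is not contained in the complement of the Torelli locus; this is true and easy, but should be said.
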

In fact, much more is true.  First, using relations \eqref{eqprankdelta0}-\eqref{eqprankdeltai} and the fundamental dimension count supplied by Theorem \ref{thdimmgf}, it
is not hard to see that $\stk M_g^f$ is dense in $\barstk M_g^f$;
every stable curve of genus $g$ and $p$-rank $f$ is a limit of smooth
curves with the same discrete parameters.  Moreover, the recursive
structure of the boundary is compatible with the $p$-rank stratification:

\begin{theorem}
\label{propmgboundary}
\cite[Lemma 3.2 and Prop.\ 3.4]{achterpriesprank}
Suppose $g \ge 2$ and $0 \le f \le g$.  Let $S$ be an irreducible
component of $\stk M_g^f$.  
\begin{alphabetize}
\item If $f > 0$, then $\bar S$ contains the image of an irreducible
  component of $\barstk M_{g-1;2}^{f-1}$ under $\kappa_{g-1}$.
\item Suppose $1 \le i \le g-1$.  Let $f_1$ and
$f_2$ be nonnegative integers such that $0 \le f_1 \le i$; $0 \le f_2
\le g-i$; and $f_1+f_2 = f$.  Then $\bar S$ contains the image of an
irreducible component of $\barstk M_i^{f_1} \cross
\barstk M_{g-i}^{f_2}$ under $\kappa_{i,g-i} $.
\end{alphabetize}
\end{theorem}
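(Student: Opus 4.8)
The statement to be proved is Theorem~\ref{propmgboundary}, describing how the $p$-rank stratification of $\barstk M_g$ interacts with the two kinds of clutching morphisms. The plan is to prove both parts by a dimension-count argument combined with the purity statement of Lemma~\ref{lempurityprank} (applied to the universal Jacobian over $\barstk M_g$) and the nonemptiness/dimension statement of Theorem~\ref{thdimmgf}. The key observation driving everything is that the clutching morphisms are \emph{finite}, so they preserve dimension, and that the $p$-rank behaves additively (for $\kappa_{i,g-i}$) or increases by exactly one (for $\kappa_{g-1}$) under clutching, by formulas~\eqref{eqprankdelta0} and~\eqref{eqprankdeltai}.

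For part~(b): fix $i$ with $1 \le i \le g-1$ and $f_1 + f_2 = f$ as in the statement. First I would note that by Theorem~\ref{thdimmgf}, $\barstk M_i^{f_1} \cross \barstk M_{g-i}^{f_2}$ is nonempty of dimension $(3i-3+f_1) + (3(g-i)-3+f_2) = 3g-6+f$. Its image under the finite morphism $\kappa_{i,g-i}$ therefore has the same dimension and, by~\eqref{eqprankdeltai}, lands inside $\barstk M_g^{f}$, in fact inside $\Delta_i[\barstk M_g] \cap \barstk M_g^f$. Now $\stk M_g^f$ has dimension $3g-3+f$ by Theorem~\ref{thdimmgf}, and $\barstk M_g^f$ is its closure (this density is recalled in the paragraph following Theorem~\ref{thdimmgf}); so every irreducible component $\bar S$ of $\barstk M_g^f$ has dimension $3g-3+f$, and the boundary piece $\bar S \cap \Delta_i$ has dimension at least $3g-3+f-1 = 3g-6+f$ provided it is nonempty — this is where the purity input is needed, to know the boundary meets $\bar S$ in the expected codimension rather than dropping further. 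The subtle point, and what I expect to be the main obstacle, is to show that each component $\bar S$ of $\barstk M_g^f$ actually \emph{does} meet $\Delta_i$ and that the resulting codimension-one locus contains (an image of) a component of $\barstk M_i^{f_1}\cross\barstk M_{g-i}^{f_2}$ with the prescribed splitting $f = f_1 + f_2$, rather than some other splitting; this requires a local deformation-theoretic argument near a boundary point, degenerating a generic curve in $S$ to a curve with a separating node, and controlling the $p$-ranks of the two sides. One shows that a generic point of $\bar S \cap \Delta_i$ is a $1$-nodal curve $C_1 \cup_{pt} C_2$, and that smoothing it independently on each side stays within the $p$-rank-$f$ locus, forcing $(f_{C_1}, f_{C_2})$ to range over all admissible splittings as we vary the component; matching the dimension count then pins down that the closure of the $(f_1,f_2)$-stratum inside $\barstk M_i \cross \barstk M_{g-i}$ has a component mapping into $\bar S$.

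For part~(a): this is the analogous argument with the non-separating clutching map $\kappa_{g-1}\colon \barstk M_{g-1;2} \to \barstk M_g$, using~\eqref{eqprankdelta0} in place of~\eqref{eqprankdeltai}. Here $\barstk M_{g-1;2}^{f-1}$ has dimension $(3(g-1)-3+(f-1)) + 2 = 3g-4+f$ (the two marked points contribute $2$ to the dimension), its image under the finite map $\kappa_{g-1}$ lies in $\Delta_0 \cap \barstk M_g^f$ by~\eqref{eqprankdelta0}, and again this image has the expected codimension one inside a component $\bar S$ of $\barstk M_g^f$. The same local analysis — degenerate a generic $p$-rank-$f$ curve of genus $g$ to one with a non-separating node, and observe that the normalization has genus $g-1$ and $p$-rank $f-1$ — identifies a component of $\barstk M_{g-1;2}^{f-1}$ whose $\kappa_{g-1}$-image sits in $\bar S$. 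I would remark that the hypothesis $f>0$ in part~(a) is exactly what makes $f-1$ a legitimate $p$-rank, and that the case $g=2$ is not excluded. The whole argument is essentially the one carried out in \cite{achterpriesprank}; the role of this theorem here is to package it for reuse, so in the writeup I would keep the deformation-theoretic core brief and lean on Lemma~\ref{lempurityprank} and Theorem~\ref{thdimmgf} for the bookkeeping.
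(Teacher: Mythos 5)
There is a genuine gap, and it sits exactly where you flag it yourself. Your proposal correctly sets up the bookkeeping — $p$-rank additivity under $\kappa_{i,g-i}$, the $+1$ rule under $\kappa_{g-1}$, finiteness of the clutching maps — but the actual content of Theorem~\ref{propmgboundary} is that \emph{every} component $\bar S$ of $\barstk M_g^f$ meets $\Delta_i$, and meets it in a component realizing \emph{every} prescribed $p$-rank splitting $(f_1,f_2)$ with $f_1+f_2=f$. You defer precisely this to an unspecified ``local deformation-theoretic argument,'' but that step is the theorem, not a loose end. Purity (Lemma~\ref{lempurityprank}) only controls the codimension of the $p$-rank-drop locus once you know it is nonempty; it says nothing about whether $\bar S$ touches the boundary at all, and nothing about which $p$-rank splitting occurs on a component of $\bar S \cap \Delta_i$. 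The cited argument in \cite{achterpriesprank} requires a further geometric input (essentially that $p$-rank strata of $\stk A_g$ are quasi-affine, so a complete positive-dimensional subvariety of $\stk M_g^f$ cannot exist, forcing $\bar S$ to leave $\stk M_g$) plus an induction exploiting the recursive structure of $\del\barstk M_g$ to hit the prescribed $(f_1,f_2)$; your proposal invokes neither, and dimension counting alone will not force $\bar S \cap \Delta_i$ to contain a component of the specific stratum $\barstk M_{i;1}^{f_1}\cross\barstk M_{g-i;1}^{f_2}$.

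Two smaller but real errors in the bookkeeping. First, by Theorem~\ref{thdimmgf} the codimension of $\stk M_g^f$ in $\stk M_g$ is $g-f$, so $\dim\stk M_g^f = (3g-3)-(g-f) = 2g-3+f$, not $3g-3+f$; your later arithmetic $3g-3+f-1 = 3g-6+f$ is also off (it equals $3g-4+f$). Second, the domain of $\kappa_{i,g-i}$ is $\barstk M_{i;1}\cross\barstk M_{g-i;1}$, with one marked point on each factor, not $\barstk M_i\cross\barstk M_{g-i}$ (the notation $\barstk M_i^{f_1}$ in the theorem statement is shorthand for the marked-point version, as the construction of $\Delta_i$ makes clear). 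The corrected count gives $\dim\bigl(\barstk M_{i;1}^{f_1}\cross\barstk M_{g-i;1}^{f_2}\bigr) = (2i-2+f_1)+(2(g-i)-2+f_2) = 2g-4+f$, which is indeed codimension one in a component of $\stk M_g^f$; so the picture you describe is consistent once the numbers are fixed, but the nonemptiness and splitting-realization steps still need an argument.
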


Consequently, closures of components of $p$-rank strata contain
chains of elliptic curves:

\begin{corollary}
\label{cortree}
\cite[Corollary 3.6]{achterpriesprank}
Suppose $g \ge 2$, $0 \le f \le g$, and $A\subset\st{1, \cdots, g}$
has cardinality $f$.  Let $S$ be an irreducible component of $\stk
M_g^f$.  Then $\bar S$ contains the moduli point of a chain of
elliptic curves $E_1, \cdots, E_g$, where $E_j$ is ordinary if and
only if $j \in A$.
\end{corollary}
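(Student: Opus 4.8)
The plan is to induct on $g$, at each stage splitting off one elliptic curve from a terminal end of the desired chain by means of Theorem \ref{propmgboundary}(b). One input is the genus-one observation: for $\delta \in \{0,1\}$, every irreducible component of $\barstk M_{1;1}^{\delta}$ contains (the moduli point of) a smooth elliptic curve of $p$-rank $\delta$ --- the supersingular locus when $\delta = 0$, which is finite and consists of such points, and the closure of the ordinary locus when $\delta = 1$.

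For the inductive step, let $g \geq 2$, let $S$ be an irreducible component of $\stk M_g^f$, and let $A \subseteq \{1,\dots,g\}$ with $\#A = f$. Set $\epsilon = 1$ if $g \in A$ and $\epsilon = 0$ otherwise; put $f_1 = f - \epsilon$ and $A' = A \cap \{1,\dots,g-1\}$, so that $\#A' = f_1$, and note $0 \leq f_1 \leq g-1$ and $0 \leq \epsilon \leq 1$. Applying Theorem \ref{propmgboundary}(b) with $i = g-1$ and $f_2 = \epsilon$ shows that $\bar S$ contains the image under $\kappa_{g-1,1}$ of a product $W_1 \times W_2$, where $W_1$ is an irreducible component of $\barstk M_{g-1;1}^{f_1}$ and $W_2$ is an irreducible component of $\barstk M_{1;1}^{\epsilon}$. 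By the genus-one observation, $W_2$ contains a smooth elliptic curve $E_g$ which is ordinary precisely when $\epsilon = 1$, i.e.\ precisely when $g \in A$.

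It remains to find in $\overline{W_1}$ the moduli point of a one-pointed chain $(E_1 \cup \cdots \cup E_{g-1},\,P)$ of elliptic curves with $E_j$ ordinary iff $j \in A'$ and with $P$ on the terminal component $E_{g-1}$: clutching $E_g$ onto $P$ then exhibits the chain $E_1 \cup \cdots \cup E_g$, with the correct ordinarity pattern, inside $\bar S$, and its $p$-rank is $f_1 + \epsilon = f$ by the additivity of $p$-rank along compact-type nodes, i.e.\ by iterating \eqref{eqprankdeltai}. To produce this pointed chain one uses that $\stk M_{g-1}^{f_1}$ is dense in $\barstk M_{g-1}^{f_1}$ (and likewise in the one-pointed setting), so that $W_1$ is the closure of a component of the $p$-rank $f_1$ stratum of smooth one-pointed genus-$(g-1)$ curves; one then applies the inductive hypothesis when $g-1 \geq 2$, and the genus-one observation when $g-1 = 1$.

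The step I expect to be the main obstacle is exactly this last piece of bookkeeping with the marked point: Theorem \ref{propmgboundary}(b) hands one a \emph{specific} component $W_1$ of the one-pointed stratum, and one must check that its closure contains a pointed chain whose marked point lies on an \emph{outer} component rather than in the interior of the chain --- otherwise clutching produces a tree of elliptic curves that is not a chain. I would handle this by carrying a one-pointed refinement of the corollary through the induction (marked point on the terminal component) and splitting off the new elliptic curve together with that marked point, which requires the --- routine, by the methods of \cite{achterpriesprank} --- one-pointed analogues of the density statement and of Theorem \ref{propmgboundary}; the remaining flexibility in the position of the marked point on the terminal elliptic component is harmless, since any two points of an elliptic curve are interchanged by a translation. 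Everything else is immediate from Theorem \ref{propmgboundary}(b) and the additivity of $p$-rank under clutching.
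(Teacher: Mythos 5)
This survey merely cites the result as \cite[Corollary 3.6]{achterpriesprank} and gives no proof of its own, so there is nothing in this paper to compare against directly; your proposal is essentially the standard argument and is correct, but the issue you flag deserves comment. Inducting on $g$ and peeling an elliptic tail off one end of the chain via Theorem~\ref{propmgboundary}(b) with $i=g-1$, then using additivity of $p$-rank across compact-type nodes, is exactly the right strategy, and the genus-one base observation and the bookkeeping $\epsilon=1$ iff $g\in A$, $A'=A\cap\{1,\dots,g-1\}$ are fine.

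The concern you raise --- that the marked point on the $(g-1)$-chain handed to you by the inductive step must sit on a terminal component $E_{g-1}$, else clutching produces a branched tree rather than a chain --- is genuine, but it does \emph{not} require carrying a one-pointed refinement of the corollary through the induction nor developing one-pointed analogues of the density statement and of Theorem~\ref{propmgboundary}. The forgetful morphism $\pi\colon\barstk{M}_{g-1;1}\to\barstk{M}_{g-1}$ is proper and flat, its fibers over $\stk{M}_{g-1}$ are smooth irreducible curves, and the $p$-rank stratification on the source is pulled back along $\pi$. Since $\stk{M}_{g-1}^{f_1}$ is dense in $\barstk{M}_{g-1}^{f_1}$, and since an open map with irreducible base and irreducible fibers has irreducible source, the irreducible components of $\barstk{M}_{g-1;1}^{f_1}$ are exactly the closures of the preimages $\pi^{-1}(S')$ as $S'$ runs over the components of $\stk{M}_{g-1}^{f_1}$. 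Thus $W_1$ corresponds to a unique such $S'$, and by going-down for the flat map $\pi$ one gets $\overline{W_1}=\pi^{-1}\bigl(\overline{S'}\bigr)$, so that $\overline{W_1}$ contains \emph{every} pointed version of any curve lying in $\overline{S'}$. Now apply the unrefined inductive hypothesis to $S'$: the chain $E_1\cup\cdots\cup E_{g-1}$ with ordinarity pattern $A'$ lies in $\overline{S'}$, and the marking of it supported on $E_{g-1}$ therefore lies in $\overline{W_1}$. Clutching in $(E_g,R)\in W_2$ via $\kappa_{g-1,1}$ then produces the desired chain inside $\overline{S}$, with $p$-rank $f$ by iterating~\eqref{eqprankdeltai}. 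With this replacement for your last paragraph, the proof is complete and needs no auxiliary one-pointed theory.
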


\subsection{Connectedness of $p$-rank strata} \label{Sprank}
\label{secstratprank}

We combine Theorem \ref{thdimmgf} with degeneration techniques to prove:
\begin{corollary}  \label{PnestedMg}
Suppose $g \ge 1$ and $0 \le f \le g$.  Let $S$
  be an irreducible component of $\stk M_g^f$.
\begin{alphabetize}
\item If $f<g$, then $S$ is in the closure of $\stk M_g^{f+1}$.
\item If $f >0$, then $\bar S$ contains an irreducible component of
  $\stk M_g^{f-1}$.
\end{alphabetize}
\end{corollary}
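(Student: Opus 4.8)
\section*{Proof proposal for Corollary \ref{PnestedMg}}

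The plan is to derive both parts from the purity theorem for $p$-ranks (Lemma~\ref{lempurityprank}) together with the dimension count of Theorem~\ref{thdimmgf}, invoking the degeneration results of Theorem~\ref{propmgboundary} only for part~(b). Fix an irreducible component $S$ of $\stk M_g^f$; by Theorem~\ref{thdimmgf} it has dimension $2g-3+f$, and its generic point $\eta$ represents a smooth curve of $p$-rank $f$. All closures below are taken in $\barstk M_g$ unless noted.

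For part~(a), assume $f<g$ and work in the local ring of $\stk M_g$ at $\eta$ (pass to an \'etale chart if one prefers schemes): this is an excellent regular local domain $T$ of dimension $g-f$, equipped with a family of Jacobians. Its generic point maps to the generic point of $\stk M_g$, which is ordinary (Theorem~\ref{thdimmgf} with $f=g$), so $T$ has generic $p$-rank $g$; and since $\stk M_g^{\le f-1}$ has dimension strictly less than $\dim \overline S$, every point of $T$ has $p$-rank $\ge f$, with $p$-rank exactly $f$ only at the closed point. Iterating Lemma~\ref{lempurityprank} inside $T$ yields a chain of integral closed subschemes $T=T_0\supsetneq T_1\supsetneq\cdots\supsetneq T_{g-f}=\{\text{closed point}\}$ with $\codim(T_i,T)=i$; since the generic $p$-ranks along the chain strictly decrease from $g$ to $f$ in exactly $g-f$ steps, they must be $g,g-1,\ldots,f+1,f$, so $T_{g-f-1}$ has generic $p$-rank $f+1$. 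As its generic point specializes to the closed point of $T$, we get $\eta\in\overline{\stk M_g^{f+1}}$, hence $S\subseteq\overline{\stk M_g^{f+1}}$.

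For part~(b), assume $f>0$ (the case $f=g$ is immediate, and we may assume $g\ge 2$). The key reduction is that it suffices to produce a single compact-type curve of genus $g$ and $p$-rank $<f$ in $\overline S$: over the locus $\stk M_g^*$ of compact type the Picard scheme is abelian, so Lemma~\ref{lempurityprank} applies to the integral scheme $\overline S\cap\stk M_g^*$ (of dimension $2g-3+f$ with generic $p$-rank $f$); if its $p$-rank-$<f$ locus is nonempty it is pure of codimension one there, hence of dimension $2g-4+f$, and a dimension count forces each of its components to have generic $p$-rank exactly $f-1$. Such a curve is supplied by degeneration: applying Theorem~\ref{propmgboundary}(b) with $i=1$, $f_1=1$, $f_2=f-1$ puts $\kappa_{1,g-1}$ of a component of $\barstk M_1^1\times\barstk M_{g-1}^{f-1}$ into $\overline S$; passing to closures and specializing the genus-$1$ factor to a supersingular elliptic curve $E$ places the curves $E\vee C_2$, with $C_2$ a generic (hence smooth) genus-$(g-1)$ curve of $p$-rank $f-1$, inside $\overline S$, and these have compact type and $p$-rank $f-1$. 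Finally, to see that the codimension-one components just produced really are loci of smooth curves and not swallowed by $\partial\barstk M_g$, one checks that every boundary stratum of $\barstk M_g$ is generically ordinary, so that its $p$-rank-$\le f-1$ sublocus has dimension at most $2g-5+f$; this is strictly smaller than $2g-4+f$, so each such component meets $\stk M_g$, and its closure in $\stk M_g$ is a component of $\stk M_g^{f-1}$ contained in $\overline S$.

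The bookkeeping with codimensions is routine; the real content is in part~(b), where the obstacle is that Theorem~\ref{propmgboundary} only ever degenerates $S$ into the boundary of $\barstk M_g$ and produces singular curves, whereas a component of $\stk M_g^{f-1}$ consists of smooth ones. The device that overcomes this is twofold: first, arrange the degenerate curve to be of compact type with a single separating node, so that purity of $p$-rank (Lemma~\ref{lempurityprank}) still applies at it; and second, use the estimate that boundary strata of $\barstk M_g$ are generically ordinary to exclude the possibility that $\overline S$ meets the $p$-rank-$<f$ locus only inside $\partial\barstk M_g$. Together these force a component of $\stk M_g^{f-1}$ into $\overline S$.
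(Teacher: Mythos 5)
Your proof is correct and takes essentially the same route as the paper's: part (a) is an iterated application of purity (Lemma~\ref{lempurityprank}) together with the dimension count of Theorem~\ref{thdimmgf}, and part (b) uses Theorem~\ref{propmgboundary}(b) with $(i,f_1,f_2)=(1,1,f-1)$ plus a supersingular degeneration of the elliptic factor, then purity and a boundary dimension count to push the $p$-rank $f-1$ locus off $\del\barstk M_g$ and into $\stk M_g$. The only cosmetic difference is that the paper phrases (a) as an induction on $g-f$, whereas you build the chain $T_0 \supsetneq \cdots \supsetneq T_{g-f}$ directly inside the local ring at the generic point of $S$; and the paper bounds $\Delta_i[\bar S]^{<f}$ by codimension two in $\bar S$, while you bound the $p$-rank $\le f-1$ locus of each $\Delta_i$ by dimension $2g-5+f$ using that the boundary strata are generically ordinary — these are the same computation.
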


\begin{proof}
Part (a) is a direct consequence of purity for $p$-rank (Lemma
\ref{lempurityprank}) and the dimension count Theorem
\ref{thdimmgf}; the proof proceeds by induction on $g-f$.

For part (b), the statement is clearly true for $g=1$, or more
generally when $f=g$.  Now suppose $g \ge 2$.  By Theorem
\ref{propmgboundary}(b), $\bar S$ contains an irreducible component of
$\barstk M_1^1 \cross \barstk M_{g-1}^{f-1}$ under the image of
$\kappa_{1,g-1}$.  Since $\stk M_1^1$ is irreducible, its closure
contains points of $p$-rank zero, and $\Delta_1[\bar S]^{f-1}$ is
nonempty.  Therefore, $\bar S^{<f}$ is nonempty and, by Lemma
\ref{lempurityprank}, has codimension one in $\bar S$.  For each $i$
between $0$ and $g$, $\Delta_i[\bar S]^{<f}$ has codimension two in
$\bar S$.  Therefore, $\bar S^{<f}$ is the closure of $S^{<f}$, and the
latter has codimension one in $S$.  The basic dimension count
(Theorem \ref{thdimmgf}) now shows that $\bar S$ contains an
irreducible component of $\stk M_g^{f-1}$.
\end{proof}

In contrast with the theory of $p$-rank strata of $\stk A_g$, at
present we have essentially no nontrivial information about
irreducibility of various $\stk M_g^f$.  Still, the method of
degeneration detects a small amount of the topology of these strata.
Recall that $\barstk M_g^f$ denotes the $p$-rank $f$ stratum of the moduli space of 
stable curves of genus $g$.

\begin{corollary}
\label{thmgconn}
If $g \ge 2$ and $0 \le f \le g$, then $\barstk M_g^f$ is connected.
\end{corollary}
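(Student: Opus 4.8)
The plan is to induct on $g$, using the degeneration result Theorem~\ref{propmgboundary} to push every irreducible component of $\barstk M_g^f$ into a single connected ``reference'' locus inside the boundary, that reference locus being assembled from $p$-rank strata of lower genus which are connected by the inductive hypothesis. Throughout I would use that $p$-ranks add under the clutching maps $\kappa$ (so the image under $\kappa$ of a product of $p$-rank strata whose ranks sum to $f$ lies in $\barstk M_g^f$), that $\stk M_g^f$ is dense in $\barstk M_g^f$, and that the forgetful morphism $\barstk M_{g';1}^{f'}\to\barstk M_{g'}^{f'}$ is surjective with connected fibres, so that $\barstk M_{g';1}^{f'}$ is connected whenever $\barstk M_{g'}^{f'}$ is. The organizing reduction is: letting $S$ run over the irreducible components of $\stk M_g^f$, density gives $\barstk M_g^f=\bigcup_S(\bar S\cap\barstk M_g^f)$, and each $\bar S\cap\barstk M_g^f$ is open and dense in the irreducible scheme $\bar S$, hence irreducible and connected; so it suffices to produce one connected $R\subseteq\barstk M_g^f$ with $R\cap\bar S\neq\emptyset$ for every $S$.

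\textbf{Base case $g=2$.} If $f\in\{1,2\}$ then $\stk M_2^f=\stk A_2^{\nu_2^f}\cap\stk M_2$ is open, dense and irreducible (Theorems~\ref{thnormanoort} and~\ref{thchaioort}(b), since $\nu_2^f\neq\sigma_2$), so $\barstk M_2^f$ is connected. If $f=0$, then by \eqref{eqprankdelta0} the boundary $\Delta_0[\barstk M_2]$ has $p$-rank $\ge 1$, so $\barstk M_2^0\subseteq\stk M_2^*\cong\stk A_2$; since an abelian surface of $p$-rank $0$ is automatically supersingular (i.e.\ $\nu_2^0=\sigma_2$), the Torelli isomorphism carries $\barstk M_2^0$ onto $\stk A_2^{\sigma_2}$, which is connected by Theorem~\ref{thchaioort}(c).

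\textbf{Inductive step.} Fix $g\ge 3$ and assume the result for all $2\le g'<g$. When $1\le f\le g$ I would take $R=\kappa_{1,g-1}(\barstk M_{1;1}^{1}\times\barstk M_{g-1;1}^{f-1})\subseteq\barstk M_g^f$: here $\barstk M_{1;1}^{1}$ is $\barstk M_{1;1}$ with its finitely many supersingular points removed, hence irreducible, and $\barstk M_{g-1;1}^{f-1}$ is connected by induction (valid since $g-1\ge 2$), so $R$ is connected; Theorem~\ref{propmgboundary}(b) with $i=1$, $(f_1,f_2)=(1,f-1)$ then puts a nonempty piece of $R$ inside $\bar S$ for every $S$. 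The delicate case is $f=0$. Let $E_1,\dots,E_s$ be the supersingular elliptic curves over $\bar\ff_p$, and set $Q_k=\kappa_{g-1,1}(\barstk M_{g-1;1}^{0}\times\{[E_k]\})$; induction applied to $\barstk M_{g-1}^0$ (for $g=3$ this is the base case) makes each $Q_k$ connected, and Theorem~\ref{propmgboundary}(b) with $i=g-1$, $(f_1,f_2)=(0,0)$ shows $\bar S$ meets some $Q_k$. Since the $Q_k$ need not be joined to one another inside the genus-one-tail boundary, I would link them by hand: choose a stable curve $C''$ of genus $g-2$ and $p$-rank $0$ carrying two marked points (using Theorem~\ref{thdimmgf}, or a supersingular elliptic curve when $g=3$), attach $E_k$ at one of them and $E_{k'}$ at the other, and observe that the resulting genus-$g$, $p$-rank-$0$ curve lies in $Q_k\cap Q_{k'}$ because it carries both $E_k$ and $E_{k'}$ as elliptic tails. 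Hence $\bigcup_k Q_k$ is connected and meets every $\bar S$, so $\barstk M_g^0$ is connected, closing the induction.

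\textbf{Main obstacle.} I expect the $f=0$ case to be the real difficulty. When there are several supersingular $j$-invariants, the locus one naturally degenerates into (curves with a supersingular elliptic tail) is a disjoint union of the $Q_k$, and one cannot pass between them while keeping $p$-rank $0$ and genus $g-1$ for the rest of the curve; this is exactly where the hypothesis $g\ge 2$ is forced, and why the argument needs both the genus-$2$ input that $\stk A_2^{\sigma_2}$ is connected (Theorem~\ref{thchaioort}(c)) and the extra room, once $g\ge 3$, for a second supersingular elliptic tail. A secondary point to keep straight is that $\barstk M_g^f$ is only the locus of $p$-rank \emph{exactly} $f$, not the closure $\overline{\stk M_g^f}$, so one must verify at each stage that $R$, the strata $Q_k$, and the curves used to link the $Q_k$ genuinely have $p$-rank $f$ — which they do precisely because $p$-ranks add under clutching.
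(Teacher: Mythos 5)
Your proof is correct, and its overall strategy --- use Theorem~\ref{propmgboundary} to degenerate every component of $\stk M_g^f$ into a single connected boundary locus built from lower-genus $p$-rank strata, then induct on $g$ --- is the same as the paper's. The difference is in the choice of clutching map, and it matters. You use $\kappa_{1,g-1}$, gluing on a genus-$1$ tail. This works smoothly when $f\geq 1$ (take the tail ordinary), but fails directly when $f=0$ because $\barstk M_1^0$ is a finite set of supersingular $j$-invariants and hence disconnected; you then have to patch things up by hand with the $Q_k$'s and the two-tail linking curve. The paper instead uses $\kappa_{2,g-2}$, choosing $(f_1,f_2)$ with $0\leq f_1\leq 2$, $0\leq f_2\leq g-2$, $f_1+f_2=f$. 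Since such a decomposition always exists for $g\geq 4$, and since $\barstk M_2^{f_1}$ is connected for \emph{all} $f_1\in\{0,1,2\}$ (including $f_1=0$, where $\barstk M_2^0$ is connected --- essentially the same $\stk A_2^{\sigma_2}$ input you use in your own base case), the paper's target $\kappa_{2,g-2}(\barstk M_2^{f_1}\times\barstk M_{g-2}^{f_2})$ is connected uniformly in $f$, and no special $f=0$ argument is needed. The paper also handles $g=3$ directly via the identification of $p$-rank strata in $\stk M_3^*$ with those in $\stk A_3$ (so its induction starts at $g=4$), whereas you handle $g=3$ by the same inductive step from $g=2$; both are fine. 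In short: same method, but the genus-$2$ elliptic tail is chosen precisely to absorb the disconnectedness of the supersingular locus that you had to confront separately, and it buys a shorter, case-free induction.
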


\begin{proof}
Since $p$-rank strata in $\stk M_3^*$ coincide with those of
$\stk A_3$, and since $p$-rank strata in $\stk A_g$ are irreducible
(Theorem \ref{thchaioort}), each $\stk M_3^f$ is irreducible.  
Similarly, $\stk M_2^1$ and $\stk M_2^2$ are irreducible, while
$\barstk M_2^0$ is connected.

We next prove by induction on $g$ that, for each $g \ge 4$ and each $0
\le f \le g$, $\barstk M_g^f$ is connected.  Fix integers $f_1$ and
$f_2$ with $0 \le f_1 \le 2$, $0 \le f_2 \le g-2$, and $f_1+f_2 = f$.
Let $S_1$ and $S_2$ be irreducible components of $\stk M_g^f$.  By
Theorem \ref{propmgboundary}(b), each closure $\bar S_i$ contains
an irreducible component of $\kappa_{2,g-2}(\barstk M_2^{f_1} \cross
\barstk M_{g-2}^{f-2})$.  By the inductive hypothesis, $\barstk M_2^{f_1} \cross
\barstk M_{g-2}^{f-2}$ is connected.  Consequently, $\bar{S_1 \cup
  S_2}$ is connected.  The theorem now follows by considering each of
the (finitely many) irreducible components of $\stk M_g^f$.
\end{proof}

\subsection{Open questions about the $p$-rank stratification}
\label{subsecopenprank}

The results above indicate that the $p$-rank stratification is in some ways ``transverse'' to
other interesting loci in $\stk M_g$.  For example, as long as $(g,f)
\not = (1,0)$ or $(2,0)$, then there exists a smooth
projective curve over $\bar\ff_p$ of genus $g$ and $p$-rank $f$ whose
automorphism group is trivial \cite{achterglasspries}.  More
generally, there is a precise sense in which a randomly chosen
Jacobian of genus $g$ and $p$-rank $f$ behaves like a randomly
selected principally polarized abelian variety of dimension $g$
\cite{achterpriesprank}.

On the other hand, there is a non-trivial interplay between the $p$-rank and the automorphism group of a curve.
For example, there are constraints on the $p$-rank of a cyclic tame cover of ${\mathbb P}^1$ \cite{bouw01}.
There are even stronger constraints in the case of a wildly ramified Galois action. 
The Deuring-Shafarevich formula places severe limitations on the $p$-rank for a $p$-group cover of curves \cite{crew84,subrao75}.
  The $p$-rank stratification of the moduli space of Artin-Schreier curves is discovered in \cite{prieszhu}.  See also \cite{GKaut, glass2}.

Here are several other open questions about the $p$-ranks of Jacobians of curves.

\begin{question} \label{Qp1}
Suppose $g \geq 4$ and $0 \leq f \leq g-1$.  Is $\calm_g^f$ irreducible?
\end{question}

\begin{question}
Suppose $g \geq 3$ and $0 \leq f \leq g-1$.  Does there exist a curve of genus $g$ and $p$-rank $f$ defined over $\ff_p$?
\end{question}

\section{Stratification by Newton polygon}
\label{secstratnp}

In this section, we explain how Theorem \ref{propmgboundary}
yields information about generic Newton polygons, beginning with the cases $g \leq 4$, and extending to 
arbitrary $g$.  Recall (Definition \ref{defnugf}) the Newton polygons $\nu_g^f$.

\subsection{Newton polygons of curves of small genus}

\begin{lemma}
\label{lemg3}
The $p$-rank zero locus $\stk M_3^0$ is irreducible, with generic
Newton polygon $\nu_3^0= \st{1/3, 2/3}$.
\end{lemma}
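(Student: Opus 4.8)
The plan is to use the fact that for $g \le 3$, the Torelli map identifies $\stk M_g^*$ with $\stk A_g$ (up to the birational issues on the boundary), so that the Newton and $p$-rank stratifications on $\stk M_3$ are literally restrictions of those on $\stk A_3$. By Theorem \ref{thchaioort}(b), the stratum $\stk A_3^{\nu}$ is irreducible for every symmetric admissible Newton polygon $\nu \ne \sigma_3$ of height $6$; in particular $\stk A_3^{\nu_3^0}$, where $\nu_3^0 = \st{1/3, 2/3}$, is irreducible. I would also record (from Theorem \ref{thchaioort}(a) together with Definition \ref{defnugf}, since $\codim(\nu_3^0)$ counts the length of a path from $\nu_3^3$ down to $\nu_3^0$ in the poset) that $\stk A_3^{\nu_3^0}$ has codimension $3$ in $\stk A_3$, hence dimension $3$, matching $\dim \stk M_3^0$ from Theorem \ref{thdimmgf}.

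First I would show $\stk M_3^0$ is irreducible. Since $\stk M_3^0 \subset \stk M_3 \subset \stk M_3^*$ and $\tau\colon \stk M_3^* \ra \stk A_3$ is an open immersion identifying $\stk M_3$ with an open dense substack of $\stk A_3$, the $p$-rank zero locus $\stk M_3^0$ is the intersection of the (irreducible, by Theorem \ref{thchaioort} applied to the $p$-rank zero locus — or by the remark after the Corollary that $\stk A_g^f$ is irreducible unless $(g,f)=(1,0),(2,0)$) locus $\stk A_3^0$ with an open dense substack; an open dense substack of an irreducible stack is irreducible, so $\stk M_3^0$ is irreducible. (One should check $\stk M_3^0$ is nonempty, but that is Theorem \ref{thdimmgf}.)

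Next I would identify the generic Newton polygon. By definition, the generic Newton polygon of the irreducible stack $\stk M_3^0$ is the Newton polygon of a generic geometric point. On $\stk A_3$, the Newton stratum $\stk A_3^{\nu_3^0}$ is a nonempty, locally closed, irreducible substack of $\stk A_3^0$ of the same dimension $3 = \dim \stk A_3^0$, hence it is open and dense in $\stk A_3^0$. Intersecting with the open dense $\stk M_3 \subset \stk A_3$, we find that $\stk M_3^{\nu_3^0} = \stk M_3^0 \cap \stk A_3^{\nu_3^0}$ is open and dense in $\stk M_3^0$. Therefore the generic point of $\stk M_3^0$ has Newton polygon $\nu_3^0 = \st{1/3,2/3}$, as claimed.

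The only genuine subtlety — and the step I expect to require the most care — is the bookkeeping at the boundary: $\tau$ is only an isomorphism on $\stk M_3^* = \barstk M_3 \setcomp \Delta_0$ and not on all of $\barstk M_3$, and it contracts fibers on $\Delta_i$ for $i \ge 1$. But since $\stk M_3^0$ consists of smooth curves, it lies in the locus where $\tau$ is an open immersion, so none of this interferes; I would simply remark that we only ever use $\tau$ on $\stk M_3$ itself, where it is an open immersion onto a dense substack of $\stk A_3$, which is all that is needed. Everything else is a direct transport of Theorem \ref{thchaioort} across the Torelli map.
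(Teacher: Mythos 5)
Your proposal is correct and takes essentially the same approach as the paper: both arguments transport Theorem \ref{thchaioort} across the Torelli map, with irreducibility coming from the irreducibility of $\stk A_3^0$ (equivalently $\stk A_3^{\nu_3^0}$) and the generic Newton polygon identified by a dimension count. The paper phrases the dimension argument dually — showing $\dim \stk A_3^{\sigma_3} = 2 < 3 = \dim \stk M_3^0$ so $\sigma_3$ cannot be generic — whereas you show $\stk A_3^{\nu_3^0}$ already has dimension $3$ and is therefore dense in $\stk A_3^0$, but these are the same calculation.
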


\begin{proof}
The only symmetric Newton polygons of height $6$ and $p$-rank zero are
$\nu_3^0$ and $\sigma_3 = \st{\half 1^{\oplus 3}}$; but the supersingular locus
in $\stk A_3^0$ is pure of dimension $2$ (\ref{thchaioort}), while
$\stk M_3^0$ is pure of dimension $3$ (\ref{thdimmgf}).  The result
now follows from the Torelli theorem and  Theorem
\ref{thchaioort}.
\end{proof}

\begin{lemma}
\label{lemnotallss}
Let $S$ be an irreducible component of $\stk M_4^0$.  Then the generic
Newton polygon of $S$ lies on or below $\sigma_1\oplus \nu_3^0= \st{1/3, 1/2, 2/3}$.
\end{lemma}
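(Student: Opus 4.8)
The plan is to exhibit a single stable curve in $\bar S$ whose Newton polygon is $\sigma_1 \oplus \nu_3^0$, and then invoke Newton polygon semicontinuity to conclude that the generic Newton polygon of $S$ lies on or below it. The natural candidate is a $2$-pointed curve of genus $3$ and $p$-rank $0$ with Newton polygon $\nu_3^0 = \{1/3,2/3\}$, glued at its two marked points to form a genus $4$ curve of $p$-rank $0$ whose Picard scheme is an extension of the genus-$3$ Jacobian by $\gp_m$ (via \eqref{diagpicdelta0}); the resulting semiabelian variety has Newton polygon $\sigma_1 \oplus \nu_3^0 = \{1/3, 1/2, 2/3\}$, since the toric part contributes the slope $1/2$ segment of height $2$. (Here I am using that the Newton polygon of a semiabelian variety is that of its associated $p$-divisible group, which is the direct sum of the toric and abelian parts.)

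First I would invoke Theorem \ref{propmgboundary}(b) with $i = 1$, $f_1 = 0$, $f_2 = 0$: this tells us that $\bar S$ contains the image under $\kappa_{1,3}$ of an irreducible component of $\barstk M_1^0 \cross \barstk M_3^0$. But actually the cleaner route, giving a point of $\Delta_0[\bar S]$ rather than $\Delta_1$, is to use part (a): since $\stk M_4^0$ is dense in $\barstk M_4^0$ and $\barstk M_4^0$ meets $\Delta_0[\barstk M_4]$ in the image of $\barstk M_{3;2}^{-1}$... no — by \eqref{eqprankdelta0}, $\kappa_3(\barstk M_{3}^{f-1}) \subseteq \barstk M_4^{f}$, so a $p$-rank-$0$ point of $\Delta_0[\barstk M_4]$ would require a genus-$3$ curve of $p$-rank $-1$, which is impossible. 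So in fact $\barstk M_4^0$ does not meet $\Delta_0$, and I must instead use $\Delta_1$. Thus the correct construction uses Theorem \ref{propmgboundary}(b): $\bar S$ contains a component of $\kappa_{1,3}(\barstk M_1^0 \cross \barstk M_3^0)$. Since $\stk M_3^0$ is irreducible with generic Newton polygon $\nu_3^0$ (Lemma \ref{lemg3}), a generic point of that boundary component is the clutching of a supersingular elliptic curve $E$ (genus $1$, $p$-rank $0$, Newton polygon $\sigma_1$) with a smooth genus-$3$ curve $C_3$ of Newton polygon $\nu_3^0$. By \eqref{eqpicdeltai}, $\pic^0$ of the clutched curve is $E \oplus \jac(C_3)$, so its Newton polygon is $\sigma_1 \oplus \nu_3^0 = \{1/3,1/2,2/3\}$.

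Then I would apply the semicontinuity statement of Section 2.4 (Katz, \cite[Theorem 2.3.1]{katzsf}): the relative Jacobian over a suitable local ring with closed point this boundary curve and geometric generic point a point of $S$ has $\nu(\text{closed fiber}) \preceq \nu(\text{generic fiber})$, i.e., $\{1/3,1/2,2/3\} \preceq \nu_{\mathrm{gen}}(S)$. With the poset convention of the paper ($\preceq$ means lying on or above), this says exactly that the generic Newton polygon of $S$ lies on or below $\sigma_1 \oplus \nu_3^0$, which is the claim. One subtlety to address: the generic point of the relevant boundary component must genuinely be the clutching of a $C_3$ with Newton polygon $\nu_3^0$ and not something more special — but since $\stk M_3^0$ is irreducible with generic Newton polygon $\nu_3^0$ by Lemma \ref{lemg3}, and the clutching morphism is finite (hence dominant onto its image component), this is automatic.

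The main obstacle is bookkeeping rather than a deep point: one must be careful that the component of $\stk M_4^0$ really does degenerate into the $\Delta_1$-boundary with a $p$-rank-$0$ genus-$3$ piece (this is supplied cleanly by Theorem \ref{propmgboundary}(b)), and that one correctly identifies the Newton polygon of the semiabelian (here actually abelian, since on $\Delta_1$ the Picard scheme is an honest abelian variety) degenerate fiber. The only genuinely nontrivial input is Lemma \ref{lemg3}, which pins down the Newton polygon of the genus-$3$ factor; everything else is the formalism of clutching, Equation \eqref{eqpicdeltai}, additivity of Newton polygons under direct sum, and Katz's semicontinuity.
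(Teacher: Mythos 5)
Your proposal lands on exactly the paper's argument: degenerate via $\kappa_{1,3}$ into $\Delta_1[\bar S]$ using Theorem \ref{propmgboundary}(b), identify the Jacobian of the clutched curve as $E \oplus \jac(C_3)$ with $E$ supersingular and $C_3$ of Newton polygon $\nu_3^0$ by Lemma \ref{lemg3} and \eqref{eqpicdeltai}, then conclude by Katz's semicontinuity. One remark on your abandoned $\Delta_0$ detour: the claim that the toric part of a $\gp_m$-extension ``contributes a slope $1/2$ segment of height $2$'' is incorrect (a $\gp_m$ has a height-one $p$-divisible group $\mmu_{p^\infty}$, and the paper does not define Newton polygons for semiabelian varieties), but since you correctly discard that route on $p$-rank grounds, it does not affect the proof.
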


\begin{proof}
By Theorem \ref{propmgboundary}, $\bar S$ contains a component of
$\kappa_{1,3}(\barstk M_{1;1}^0 \cross \barstk M_{3;1}^0)$, which has
generic Newton polygon $\st{1/2} \oplus \st{1/3,2/3}$ (Lemma
\ref{lemg3}).   The result for
$S$ follows from the semicontinuity of Newton polygons.
\end{proof}

\begin{lemma}
\label{lemg4}
In $\stk M_4^0$, 
\begin{alphabetize}
\item\label{partatleast} there is at least one irreducible component with
  generic Newton polygon $\nu_4^0$; and 

\item\label{partatmost} there is at most one irreducible component with
  generic Newton polygon $\nu_3^0\oplus \sigma_1$.
\end{alphabetize}
\end{lemma}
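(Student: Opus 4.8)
\textbf{Proof proposal for Lemma \ref{lemg4}.}

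The plan is to use the $U(3,1)$ unitary Shimura variety to locate curves of genus $4$ and $p$-rank $0$ inside $\stk M_4$, and to analyze how such a family meets $\stk M_4^0$ by a dimension and semicontinuity argument. First I would fix an imaginary quadratic field $\kk$ in which $p$ splits, and consider the Shimura variety $\Ell$ attached to $\gu(3,1)$: it parametrizes principally polarized abelian fourfolds $X$ with an action of the ring of integers of $\kk$ of signature $(3,1)$. Such $X$ have $p$-rank $0$, and the generic Newton polygon on $\Ell$ is $\nu_4^0 = \st{1/4,3/4}$, while the supersingular locus $\Ell^{\sigma_4}$ and the locus with Newton polygon $\nu_3^0 \oplus \sigma_1$ are proper closed substrata. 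To connect this to curves, I would invoke Example \ref{Ep=3g=4} (or rather its analogue over a general $p$, established via Lemma \ref{lemg4}'s companion Lemma on $U(3,1)$) to produce at least one point of $\Ell$ that lies in the image of the Torelli map, i.e., corresponds to a Jacobian of a smooth genus $4$ curve with Newton polygon $\nu_4^0$; by openness of the $\nu_4^0$ Newton stratum in $\Ell$ and irreducibility of $\Ell$, a whole open dense subset of $\Ell$ consists of Jacobians of smooth curves with that Newton polygon. This yields a positive-dimensional family inside $\stk M_4$ whose generic member has Newton polygon $\nu_4^0$; the component of $\stk M_4^0$ containing this family has generic Newton polygon $\preceq \nu_4^0$, but since $\nu_4^0$ is the most generic symmetric Newton polygon of height $8$ and $p$-rank $0$, it must equal $\nu_4^0$. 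This proves part \eqref{partatleast}.

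For part \eqref{partatmost}, I would argue by a codimension count together with the irreducibility statements from Theorem \ref{thchaioort}. Suppose $S$ is a component of $\stk M_4^0$ with generic Newton polygon $\nu_3^0 \oplus \sigma_1$. By Theorem \ref{thdimmgf}, $\dim S = \dim \stk M_4 - 4 = 9 - 4 = 5$. The Torelli map sends the generic point of $S$ to a point of $\stk A_4^{\nu_3^0 \oplus \sigma_1}$, which by Theorem \ref{thchaioort}(a)--(b) is irreducible of codimension $\codim(\nu_3^0\oplus\sigma_1)$ in $\stk A_4$; reading off the poset in Figure \ref{figposetg4}, this stratum has codimension $3$, hence dimension $10 - 3 = 7$. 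The key point is that the closure of $\tau(S)$ is then a $5$-dimensional irreducible subvariety of the $7$-dimensional irreducible variety $\stk A_4^{\nu_3^0\oplus\sigma_1}$, and any two such components of $\stk M_4^0$ with this generic Newton polygon map into the same irreducible $\stk A_4$-stratum. To upgrade "mapping into the same stratum" to "equal", I would use the birationality of the Torelli map onto its image (recalled in Section \ref{subsecdm}): two distinct components of $\stk M_4^0$ with generic Newton polygon $\nu_3^0\oplus\sigma_1$ would give distinct $5$-dimensional loci in $\stk M_4$, both mapping into $\stk A_4^{\nu_3^0\oplus\sigma_1}$, and I would need to show this forces one of them to lie on the Torelli boundary or to coincide with the other.

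The main obstacle, I expect, is precisely this last uniqueness step: the bare dimension count does not by itself preclude several distinct $5$-dimensional components of $\stk M_4^0$ all having generic Newton polygon $\nu_3^0\oplus\sigma_1$. To handle it, I would show that any such component, when degenerated along $\Delta_1$ via Theorem \ref{propmgboundary}(b) with $(f_1,f_2)=(0,0)$ and $i=1$, produces a component of $\barstk M_1^0 \times \barstk M_3^0$ in its closure, whose generic Newton polygon is $\sigma_1 \oplus \nu_3^0$ by Lemma \ref{lemg3}; combined with semicontinuity this pins the component down. Alternatively — and this is likely the cleaner route — I would identify the locus in $\stk M_4^0$ with generic Newton polygon $\nu_3^0\oplus\sigma_1$ with (an open part of) a single explicitly described family: curves admitting a suitable map to an elliptic curve or a specific cyclic cover structure forcing the splitting $\jac \sim E \times (\text{abelian 3-fold of Newton polygon }\nu_3^0)$ up to isogeny, and then appeal to irreducibility of the relevant Hurwitz-type space or of $\stk M_3^0$ (Lemma \ref{lemg3}) to conclude there is a unique such component. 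I would reconcile these approaches and pick whichever gives the cleanest proof of uniqueness; the dimension bookkeeping from Theorems \ref{thnormanoort}, \ref{thdimmgf}, and \ref{thchaioort} is routine and I would not belabor it.
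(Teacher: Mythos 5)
Your proposal for part (b) has a genuine computational error that creates the apparent difficulty you then try to work around. You compute $\codim(\nu_3^0\oplus\sigma_1) = 3$, giving $\dim \stk A_4^{\nu_3^0\oplus\sigma_1} = 7$, and then observe (correctly, given that count) that a $5$-dimensional irreducible $\tau(S)$ sitting inside a $7$-dimensional irreducible stratum does not pin $S$ down. But the codimension is actually $5$: tracing any maximal chain in Figure \ref{figposetg4} from $\nu_4^4$ down to $\nu_3^0\oplus\sigma_1$ takes $5$ steps ($\nu_4^4 \to \nu_4^3 \to \nu_4^2 \to \nu_1^1\oplus\nu_3^0 \to \nu_4^0 \to \nu_3^0\oplus\sigma_1$). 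So $\stk A_4^{\nu_3^0\oplus\sigma_1}$ is irreducible of dimension $10-5=5$, the same as $\dim S$. With that corrected, the argument closes immediately (and this is the paper's argument): $\tau(S)$ is irreducible of dimension $5$ and generically lands in the irreducible $5$-dimensional stratum $\stk A_4^{\nu_3^0\oplus\sigma_1}$, so $\overline{\tau(S)}$ is the whole stratum closure; injectivity of the Torelli map on $\stk M_4$ then forces $S$ to be unique. The degeneration-along-$\Delta_1$ and Hurwitz-space alternatives you sketch are unnecessary and would not by themselves supply the uniqueness without extra work.

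For part (a), the high-level idea — use a signature $(3,1)$ unitary Shimura subvariety of $\stk A_4$ that lies in the Torelli locus — matches the paper. But two points need care. First, you cannot ``fix an imaginary quadratic field $\kk$ in which $p$ splits'': the field must be $\rat(\zeta_3)$, because that is the field whose CM action is realized geometrically by the trigonal family $y^3 = f(x)$, and it is precisely that family which puts $\stk Z$ into the (compactified) Torelli locus. An arbitrary imaginary quadratic field in which $p$ splits gives a Shimura variety with no obvious relation to Jacobians. Second, once $\kk = \rat(\zeta_3)$ is fixed, you lose the freedom to assume $p$ splits; you must treat the inert case as well (the paper cites Mantovan for $p$ split, B\"ultel--Wedhorn for $p$ inert, and Example \ref{Ep=3g=4} for $p=3$). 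Also, your claim that $\nu_4^0$ is the \emph{generic} Newton polygon on $\Ell$ is more than is needed and is not what the cited results directly give — the paper only needs, and only cites references for, the existence of a single point of $\stk Z$ with Newton polygon $\nu_4^0$; semicontinuity then does the rest inside $\stk M_4^0$.
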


\begin{proof}
  Let $S$ be an irreducible component of $\stk M_4^0$; then $S$ has
  dimension $5$ (Theorem \ref{thdimmgf}).  Suppose the generic Newton polygon of $S$ is
  $\nu_3^0\oplus \sigma_1$.  The locus in $\stk A_4$ of abelian
  fourfolds with Newton polygon $\nu_3^0\oplus \sigma_1$ is
  irreducible of dimension $5$ (Theorem \ref{thchaioort}).  Thus $S$,
  or rather its image
  $\tau(S)$ under the Torelli map, must coincide with this locus.  In
  particular, such an $S$, if it exists, is unique; this proves
  \eqref{partatmost}.

For part \eqref{partatleast}, it suffices to show that there exists
some curve whose Jacobian has Newton polygon $\nu_4^0$.  Consider
the moduli space $\stk Z$ of principally polarized abelian fourfolds
equipped with an action by $\integ[\zeta_3]$ of signature $(3,1)$.
Then $\stk Z$ is contained in the (compactified) Torelli locus.  

This has been known for some time,  but we
provide a sketch here. 
For a squarefree polynomial $f(x)$ of degree
$6$, let $C_f$ denote the curve with affine equation $y^3 = f(x)$.
Then $\jac(C_f)$ has an action by $\integ[\zeta_3]$ of signature
$(3,1)$.  On one hand, the parameter space for such curves has
dimension $6 - \dim {\rm PGL}_2 = 3$.  On the other hand, $\stk Z$ itself is
irreducible (since $\rat(\zeta_3)$ has class number one) of dimension
$3 \cdot 1 = 3$.  Consequently, $\tau(\stk M_4)$ contains an open,
dense subspace of $U$, and $\tau(\stk M_4^*)$, which is equal to the
closure of $\stk M_4$ in $\stk A_4$, contains all of $\stk Z$.

It now suffices to show that there exists a point on $\stk Z$
parametrizing an abelian variety with Newton polygon $\nu_4^0$.  If
$p$ splits in $\rat(\zeta_3)$, this follows from \cite[Section
2.2]{mantovanunitary}.  (In the notation of \cite{mantovanunitary},
$(q,h-q) = (1,3)$, and $\alpha$ is the Newton polygon of slope $1/4$.)
If $p$ is inert in $\rat(\zeta_3)$, this follows from
\cite[Section 5.4]{bultelwedhorn}.  (In the notation of \cite{bultelwedhorn},
$(n-s,s) = (3,1)$, $\rho = 4$.)  
For $p=3$, this follows from Example \ref{Ep=3g=4}.
\end{proof}

\subsection{Generic Newton polygons}

\begin{proposition} \label{Pinduction}
Let $g_0, f \in {\mathbb N}$ and let $g=g_0+f$.
\begin{alphabetize}
\item 
If the generic Newton polygon of every component of $\calm_{g_0}^0$ is $\nu_{g_0}^0$, 
then the generic Newton polygon of every component of $\calm_g^f$ is $\nu_{g}^f$.

\item If the generic Newton polygon of at least one component of $\calm_{g_0}^0$ is $\nu_{g_0}^0$
then the generic Newton polygon of at least one component of $\calm_{g}^f$ is $\nu_g^f$.
\end{alphabetize}
\end{proposition}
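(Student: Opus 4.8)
The plan is to derive both parts from a single reduction. By Definition~\ref{defnugf}, $\nu_g^f$ is the largest admissible symmetric Newton polygon of height $2g$ and $p$-rank $f$, and since $g-f=g_0$ one has $\nu_g^f=\nu_f^f\oplus\nu_{g_0}^0$, where $\nu_f^f$ is the Newton polygon of an ordinary abelian $f$-fold. We may assume $f\ge 1$ and $g_0\ge 1$: when $f=0$ the statement is identical to its hypothesis, and when $g_0=0$ both hypothesis and conclusion merely record that the ordinary locus $\calm_f^f$ is irreducible with generic Newton polygon $\nu_f^f$. Now fix an irreducible component $S$ of $\calm_g^f$ and let $\nu_{\mathrm{gen}}(S)$ be its generic Newton polygon. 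The generic point of $S$ has $p$-rank exactly $f$, so $\nu_{\mathrm{gen}}(S)$ is an admissible symmetric Newton polygon of height $2g$ and $p$-rank $f$, whence $\nu_{\mathrm{gen}}(S)\preceq\nu_g^f$. Thus it suffices to exhibit a single point $s\in\bar S$ whose Jacobian has Newton polygon $\nu_g^f$: semicontinuity of Newton polygons then gives $\nu_g^f\preceq\nu_{\mathrm{gen}}(S)$, and hence equality.

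The point $s$ I aim for is the moduli point of a chain $C_1\vee C_2$ lying in $\Delta_f[\bar\calm_g]$, with $C_1$ a \emph{smooth} ordinary curve of genus $f$ and $C_2$ a \emph{smooth} curve of genus $g_0$ whose Newton polygon is $\nu_{g_0}^0$: then by \eqref{eqpicdeltai} one has $\jac(C_1\vee C_2)\iso\jac(C_1)\times\jac(C_2)$, an abelian $g$-fold with Newton polygon $\nu_f^f\oplus\nu_{g_0}^0=\nu_g^f$. That such chains occur inside the relevant boundary loci follows from the density of $\calm_h^e$ in $\bar\calm_h^e$ (the remark after Theorem~\ref{thdimmgf}): since $\calm_h^e$ is open dense in $\bar\calm_h^e$, each irreducible component of $\bar\calm_h^e$ is the closure of an irreducible component of $\calm_h^e$, and over a dense open of the latter the Newton polygon is constant and equal to its generic value. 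In particular, $\calm_f^f$ is the ordinary locus of the irreducible space $\calm_f$, hence irreducible, so $\bar\calm_f^f$ is irreducible and contains smooth ordinary genus-$f$ curves; and every component $\bar T$ of $\bar\calm_{g_0}^0$ is the closure of a component $T^\circ$ of $\calm_{g_0}^0$, so if $T^\circ$ has generic Newton polygon $\nu_{g_0}^0$ then $\bar T$ contains smooth genus-$g_0$ curves with Newton polygon $\nu_{g_0}^0$.

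For part (a), apply Theorem~\ref{propmgboundary}(b) to $S$ with $i=f$, $f_1=f$, $f_2=0$ (legal since $1\le f\le g-1$): because $\bar\calm_f^f$ is irreducible, $\bar S$ contains $\kappa_{f,g_0}(\bar\calm_f^f\times\bar T)$ for some component $\bar T=\overline{T^\circ}$ of $\bar\calm_{g_0}^0$. By the hypothesis, $T^\circ$ has generic Newton polygon $\nu_{g_0}^0$; choosing a smooth ordinary $C_1$ in $\bar\calm_f^f$, a smooth $C_2$ in $\bar T$ with Newton polygon $\nu_{g_0}^0$, and any marked points, the resulting chain lies in $\bar S$ and has Newton polygon $\nu_g^f$. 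Thus $\nu_{\mathrm{gen}}(S)=\nu_g^f$, and as $S$ was arbitrary, part (a) follows. For part (b), use the hypothesis to fix \emph{one} component $T^\circ$ of $\calm_{g_0}^0$ with generic Newton polygon $\nu_{g_0}^0$, set $\bar T=\overline{T^\circ}$, and let $Z$ be the image under $\kappa_{f,g_0}$ of the preimage of $\bar\calm_f^f\times\bar T$ in the corresponding $1$-pointed moduli; $Z$ is irreducible, being the image under the finite clutching map of a product of universal curves over irreducible bases with smooth generic fiber. By \eqref{eqprankdeltai} every point of $Z$ has $p$-rank $f$, so $Z\subseteq\bar\calm_g^f\subseteq\overline{\calm_g^f}$ (density again), and irreducibility of $Z$ places it inside $\bar S$ for some component $S$ of $\calm_g^f$. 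Exactly as in part (a), $Z$ contains a point of $\bar S$ whose Jacobian has Newton polygon $\nu_g^f$, so $\nu_{\mathrm{gen}}(S)=\nu_g^f$.

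The step carrying the real content, and needing the most care, is the Newton polygon computation at $s$: the generalized Jacobian of a \emph{singular} stable curve is only semiabelian, and the Newton polygon of its $p$-divisible part need not even have height $2g$, so one must genuinely know that the boundary stratum landed in contains chains of \emph{smooth} curves carrying the prescribed $p$-rank and Newton polygon — which is precisely where the density statements and the hypothesis on $\calm_{g_0}^0$ are used. A secondary point, which explains the asymmetry between (a) and (b), is that Theorem~\ref{propmgboundary}(b) does not let one choose which component of $\bar\calm_{g_0}^0$ appears inside a given $\bar S$; so part (a) needs the hypothesis for \emph{every} component of $\calm_{g_0}^0$, whereas in part (b) one builds the boundary locus $Z$ from a single chosen component and only claims that \emph{some} component of $\calm_g^f$ contains $Z$ in its closure.
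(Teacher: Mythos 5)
Your proof is correct. Part (a) follows essentially the paper's route: apply Theorem~\ref{propmgboundary}(b) with $i=f$, $f_1=f$, $f_2=0$ to find a product $\barstk M_{f;1}^f\times\barstk M_{g_0;1}^0$ in $\bar S$, use the hypothesis and irreducibility of the ordinary locus to pin down the Newton polygon there, then sandwich the generic Newton polygon of $S$ between $\nu_g^f$ (by semicontinuity from the boundary point) and $\nu_g^f$ (by the $p$-rank-$f$ constraint). The one stylistic difference is that you isolate a single boundary point of compact type and compute there directly, rather than speaking of the generic Newton polygon of the boundary component, but the content is the same.

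Part (b) is where you genuinely diverge from the paper, and your route is a clean alternative. The paper proves (b) by induction on $f$: it takes a component $S_{g-1}$ of $\stk M_{g-1}^{f-1}$ with the right generic Newton polygon, pushes it into $\Delta_0[\barstk M_g^f]$ via the self-clutching map $\kappa_{g-1}$ (which attaches a $\gp_m$-factor to the Jacobian and raises the $p$-rank by one), and invokes the cited Lemma 3.2 to land $Z_g$ inside the closure of some component of $\stk M_g^f$. Your version instead works entirely inside $\Delta_f$: you form $Z=\kappa_{f,g_0}$ of the product of the $1$-pointed ordinary locus with (the $1$-pointed preimage of) a single chosen component $\bar T$, observe $Z$ is irreducible and sits in $\barstk M_g^f\subseteq\overline{\stk M_g^f}$ by density, and hence lies in $\bar S$ for some component $S$. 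This buys two things: it keeps the construction symmetric with part (a), and it avoids ever evaluating a Newton polygon on a $\Delta_0$-boundary point (where the Picard variety is semiabelian), since every curve in $Z$ is of compact type. The cost is that you replace the paper's appeal to \cite[Lemma 3.2]{achterpriesprank} with the slightly more delicate chain ``$Z$ irreducible $\Rightarrow$ $Z$ lies in one irreducible component of $\overline{\stk M_g^f}$ $\Rightarrow$ that component is $\bar S$ for a component $S$ of $\stk M_g^f$,'' which you justify correctly. Your closing remark correctly identifies why (a) needs the hypothesis on all components while (b) needs it on only one: Theorem~\ref{propmgboundary}(b) gives no control over which component of $\barstk M_{g_0}^0$ appears in the boundary of a given $\bar S$.

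Two tiny quibbles, neither a gap: the paper's statement ``$\stk M_{f;1}^f$ is irreducible with generic Newton polygon $\nu_g^g$'' is a typo for $\nu_f^f$, and your argument that $\barstk M_f^f$ is irreducible (via density of the irreducible $\stk M_f^f$) is right but worth a sentence, since $\barstk M_f^f$ is not the closure of $\stk M_f^f$ in $\barstk M_f$ — it is the $p$-rank-$f$ stratum of the stable locus, which is only locally closed.
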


\begin{proof}
For (a), let $S$ be a component of $\stk M_g^{f}$ and consider its closure in $\overline{\calm}_g^f$.
By Theorem \ref{propmgboundary}(b), $\bar S$ contains the image of an irreducible component of $\stk M_{f;1}^{f} \cross \stk M_{g_0;1}^0$.  
Now $\stk M_{f;1}^f$ is irreducible with generic Newton polygon $\nu_g^g$.
By hypothesis, every component of $\stk M_{g_0;1}^0$ has generic Newton polygon $\nu_{g_0}^0$.
By semicontinuity, the generic Newton polygon $\nu$ of $S$ lies on or below $\nu_{g}^f$.
On the other hand, the $p$-rank $f$ condition implies that $\nu$ has $f$ slopes of $0$ and $1$.
This is only possible if $\nu=\nu_g^f$.

The proof of (b) is by induction on $f$.  The base case is the
hypothesis that there exists a component $S_{g_0}$ of $\stk M_{g_0}^0$
with generic Newton polygon $\nu_{g_0}$.  Now suppose, as inductive
hypothesis, that $S_{g-1}$ is a component of $\stk M_{g-1}^{f-1}$which
has generic Newton polygon $\nu_{g-1}^{f-1}$.  We add a labeling of
two points to each curve represented by a point of $S_{g-1}$ by
letting $T_{g-1} = S_{g-1} \cross_{\stk M_{g-1}} \stk
M_{g-1;2}^{f-1}$.  Then consider the image under the clutching
morphism $Z_g = \kappa_{g-1}(T_{g-1})$ which is contained in
$\Delta_0[\barstk M_g^{f}]$.  By \cite[Lemma 3.2]{achterpriesprank},
there exists an irreducible component $S_g$ of $\stk M_g^{f}$ such
that $\bar S_g$ contains $Z_g$.  By semicontinuity, the generic Newton
polygon $\nu$ of $S_g$ lies on or below $\nu_{g}^f$.  Then
$\nu=\nu_g^f$ by the $p$-rank $f$ condition.
\end{proof}

If $g \le 2$, then the $p$-rank of an abelian variety determines its
Newton polygon.  More generally, if $f \in \st{g-2,g-1,g}$, then there
is a unique symmetric admissible Newton polygon of height $2g$ and
$p$-rank $f$.  In contrast, if $f< g-2$ then the $p$-rank
constrains, but does not determine, the Newton polygon.
% If $g \geq 2$ and $g-2\le f \le g$, then the Newton polygon of a quasipolarized $p$-divisible group of
% height $2g$ and $p$-rank $f$ is uniquely determined.  The first
% non-trivial case of the problem of determining generic Newton polygons
% for curves of given $p$-rank is when $f=g-3$. 

\begin{corollary}
\label{cormgnpg3}
Let $g \geq 3$.
If $S$ is a component of $\stk M_g^{g-3}$, then $S$ has generic Newton polygon $\nu_{g}^{g-3}=\st{0^{\oplus g-3}, 1/3, 2/3, 1^{\oplus g-3}}$.
\end{corollary}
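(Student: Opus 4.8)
The plan is to deduce this from Proposition \ref{Pinduction}(a) together with the genus-$3$ case established in Lemma \ref{lemg3}. Set $g_0 = 3$ and $f = g - 3$, so that $g = g_0 + f$. Lemma \ref{lemg3} says precisely that $\stk M_3^0$ is irreducible with generic Newton polygon $\nu_3^0 = \st{1/3, 2/3}$; in particular, \emph{every} component of $\stk M_{g_0}^0$ (there is only one) has generic Newton polygon $\nu_{g_0}^0$. This is exactly the hypothesis of Proposition \ref{Pinduction}(a). Applying that proposition, every component of $\stk M_g^{g-3}$ has generic Newton polygon $\nu_g^{g-3}$.

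It then remains only to identify $\nu_g^{g-3}$ explicitly. By Definition \ref{defnugf}, for $0 < f < g$ one has $\nu_g^f = \nu_f^f \oplus \nu_{g-f}^0$, so $\nu_g^{g-3} = \nu_{g-3}^{g-3} \oplus \nu_3^0$. Now $\nu_{g-3}^{g-3} = \st{0^{\oplus g-3}, 1^{\oplus g-3}}$ and $\nu_3^0 = \st{1/3, 2/3}$, and since Newton polygons add by concatenation of slopes (equivalently $m_{G \oplus H}(\lambda) = m_G(\lambda) + m_H(\lambda)$), the direct sum is $\st{0^{\oplus g-3}, 1/3, 2/3, 1^{\oplus g-3}}$, matching the statement. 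One should also note the edge case $g = 3$: there $f = 0$ and the claim reduces directly to Lemma \ref{lemg3}, since $\nu_3^{0}$ is by convention $\st{1/3,2/3}$ and the formula $\nu_g^f = \nu_f^f \oplus \nu_{g-f}^0$ is only invoked for $0 < f < g$.

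There is essentially no obstacle here: the corollary is a formal consequence of the genus-$3$ input and the inductive machinery of Proposition \ref{Pinduction}(a). If anything, the one point requiring a word of care is that Proposition \ref{Pinduction}(a) as stated has hypothesis ``the generic Newton polygon of every component of $\calm_{g_0}^0$ is $\nu_{g_0}^0$,'' and one must confirm this applies when $\calm_{g_0}^0$ has a single component, which it does by Lemma \ref{lemg3}; the argument inside Proposition \ref{Pinduction}(a) then runs verbatim, using Theorem \ref{propmgboundary}(b) to put a clutched chain $\stk M_{f;1}^f \times \stk M_{g_0;1}^0$ in the boundary of each component $S$ of $\stk M_g^{g-3}$, semicontinuity to bound the generic Newton polygon of $S$ above by $\nu_g^{g-3}$, and the $p$-rank $g-3$ condition to force equality. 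So the proof is a two-line citation plus the bookkeeping of the previous paragraph.
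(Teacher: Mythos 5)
Your proof matches the paper's: both invoke Lemma \ref{lemg3} to verify that every (i.e., the unique) component of $\stk M_3^0$ has generic Newton polygon $\nu_3^0$, and then apply Proposition \ref{Pinduction}(a) with $g_0=3$, $f=g-3$. The extra bookkeeping you supply (unpacking $\nu_g^{g-3}$ from Definition \ref{defnugf} and checking the $g=3$ edge case) is correct and harmless.
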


\begin{proof}
By Lemma \ref{lemg3}, the generic Newton polygon of every component of $\calm_{3}^0$ is $\nu_{3}^0$.
The result follows from Proposition \ref{Pinduction}(a).
%Let $S$ be an irreducible component of $\stk M_g^{g-3}$.  If $g = 3$,
%the desired result is Lemma \ref{lemg3}.  Otherwise, by Proposition \ref{propmgboundary}, $\bar S$ contains the image
%of an irreducible component of $\stk M_{g-3;1}^{g-3} \cross
%\stk M_3^0$.  Since $\stk M_{g-3;1}^{g-3}$ and $\stk M_{3;1}^0$ are both
%irreducible, with respective generic Newton polygons $\nu_{g-3,g-3}$
%and $\nu_{3,0}$, the generic Newton polygon of $S$ lies on or below, 
%and thus coincides with, $\nu_{g,g-3}$.
\end{proof}

We obtain partial information in the next case when $f=g-4$.

\begin{corollary}
\label{thmgnpg4}
Let $g \ge 4$.  There exists a component of $\stk M_g^{g-4}$ with generic Newton polygon $\nu_{g}^{g-4}$.
In particular, there is a smooth projective curve whose Jacobian has Newton polygon $\nu_{g}^{g-4}$.
\end{corollary}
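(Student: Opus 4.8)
The plan is to deduce Corollary~\ref{thmgnpg4} directly from Lemma~\ref{lemg4}\eqref{partatleast} and Proposition~\ref{Pinduction}(b), with $g_0 = 4$ and $f = g-4$. By Lemma~\ref{lemg4}\eqref{partatleast}, there is at least one irreducible component of $\stk M_4^0$ whose generic Newton polygon is $\nu_4^0$. This is precisely the hypothesis needed to invoke Proposition~\ref{Pinduction}(b): with $g_0 = 4$ and $g = g_0 + f = 4 + (g-4)$, the proposition yields a component of $\stk M_g^{g-4}$ whose generic Newton polygon is $\nu_g^{g-4}$. Since the generic point of that component parametrizes a smooth projective curve (as $\stk M_g$ parametrizes smooth curves), its Jacobian has Newton polygon $\nu_g^{g-4}$, which gives the ``in particular'' clause.

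First I would note that the case $g = 4$ is exactly Lemma~\ref{lemg4}\eqref{partatleast} itself (the induction in Proposition~\ref{Pinduction}(b) has empty content when $f = 0$), so there is nothing further to check there. For $g \geq 5$, I would simply cite the two inputs in sequence. One small point worth making explicit in the writeup: the Newton polygon $\nu_g^{g-4}$ in the statement is, by Definition~\ref{defnugf}, equal to $\nu_4^4 \oplus \nu_4^0$ wait --- more precisely $\nu_{g}^{g-4} = \nu_{g-4}^{g-4} \oplus \nu_4^0$, which has $g-4$ slopes each of $0$ and $1$ together with the slopes $\{1/4, 3/4\}$ coming from $\nu_4^0 = \{1/4, 3/4\}$; this matches the slopes $\{0, \tfrac14, \tfrac34, 1\}$ advertised in the abstract and introduction.

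There is essentially no obstacle here: the corollary is a formal consequence of results already established. The only thing to be careful about is bookkeeping --- making sure the indices in Proposition~\ref{Pinduction}(b) are lined up ($g_0 = 4$, not $g_0 = g-4$), and that $\nu_4^0$ (not some other height-$8$ polygon) is the one produced by Lemma~\ref{lemg4}. The genuine mathematical content --- that $\nu_4^0$ is realized by a smooth curve in every characteristic, via the unitary Shimura variety of type $\U(3,1)$ and the supplementary Example~\ref{Ep=3g=4} for $p=3$, together with the degeneration argument of Theorem~\ref{propmgboundary}(b) underlying Proposition~\ref{Pinduction} --- has already been absorbed into those earlier results, so the proof of Corollary~\ref{thmgnpg4} is a one-line citation.

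\begin{proof}
Apply Proposition~\ref{Pinduction}(b) with $g_0 = 4$ and $f = g - 4$, so that $g = g_0 + f$. The hypothesis of Proposition~\ref{Pinduction}(b) is that the generic Newton polygon of at least one component of $\stk M_4^0$ is $\nu_4^0$, which is exactly Lemma~\ref{lemg4}\eqref{partatleast}. We conclude that the generic Newton polygon of at least one component of $\stk M_g^{g-4}$ is $\nu_g^{g-4}$. The generic point of such a component parametrizes a smooth projective curve, whose Jacobian therefore has Newton polygon $\nu_g^{g-4}$.
\end{proof}
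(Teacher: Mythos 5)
Your proof is correct and is essentially the paper's own argument: both cite Lemma~\ref{lemg4}\eqref{partatleast} to produce a component of $\stk M_4^0$ with generic Newton polygon $\nu_4^0$, then apply Proposition~\ref{Pinduction}(b) with $g_0 = 4$ and $f = g-4$. Your extra remark about the $g=4$ base case and the bookkeeping of indices is accurate but not needed.
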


\begin{proof}
By Lemma \ref{lemg4}\eqref{partatleast}, there exists a component $S_4$ of $\stk M_4^0$ with generic Newton polygon $\nu_{4}^0$.
The result follows from Proposition \ref{Pinduction}(b).
%Inductively,
%suppose $S_{g-1} \subseteq \stk M_{g-1}^{g-5}$ has generic Newton
%polygon $\nu_{g-1,g-5}$.  Let $T_{g-1} = S_{g-1} \cross_{\stk M_{g-1}}
%\stk M_{g-1;2}^{g-5}$, and let $Z_g = \kappa_{g-1}(T_{g-1}) \subseteq
%\del\barstk M_g^{g-4}$.  By \cite[Lemma 3.2]{AP:monoprank}, there
%exists an irreducible component $S_g$ of $\stk M_g^{g-4}$ such that
%$\bar S_g$ contains $Z_g$.  By semicontinuity, the generic Newton
%polygon of $S_g$ is $\nu_{g,g-4}$.
\end{proof}

A better understanding of $\calm_5^0$ would allow one to prove results for arbitrary $g$ when $f=g-5$.

\section{Hyperelliptic curves}  
Recall that a hyperelliptic curve is a smooth projective curve $C$
which  can be realized as a double cover $C \ra {\mathbb P}^1$ of the
projective line. Among all curves, hyperelliptic curves have enjoyed special
attention.  On the practical side, algorithmic methods for handling
hyperelliptic curves over finite fields are much more highly developed
than they are for arbitrary curves.  On the theoretical side,
hyperelliptic curves over finite fields are a natural function-field
analogue of quadratic number fields, and thus an attractive site for
investigation of conjectures.  In this section, we briefly
sketch the extent to which the ideas and results surveyed here for
$\stk M_g$ extend to the moduli space of hyperelliptic curves.
Throughout, we assume that the characteristic $p$ is {\em odd}.

\paragraph{The boundary of $\stk H_g$}

Let $\stk H_g$ be the moduli space of hyperelliptic curves.  Any given curve
admits at most one hyperelliptic involution $\iota$, and thus there is an
inclusion $\stk H_g \inject \stk M_g$.  Let $\barstk H_g$ be the
closure of $\stk H_g$ in $\stk M_g$.     The boundary $\del H_g$
necessarily admits a decomposition $\del H_g = \cup \Delta_i [\barstk H_g]$,
but constructing the full boundary is somewhat delicate.
Briefly, if two hyperelliptic curves $(C_1,P_1)$ and $(C_2,P_2)$, with
hyperelliptic involutions $\iota_1$ and $\iota_2$, are clutched, then
the resulting curve is hyperelliptic if and only if each $P_i$ is fixed by
$\iota_i$.  Consequently, for $0<i<g$, $\Delta_i[\barstk H_g]$ is
described as the image of
\[
\xymatrix{
\tilstk  H_i \cross \tilstk H_{g-i} \ar[r] & \tilstk H_g \ar[r] & \barstk H_g
}
\]
where $\tilstk H_g$ is the moduli space of hyperelliptic curves equipped
with a labeling of the $2g+2$ points of the smooth ramification locus,
and $\tilstk H_g \ra  \barstk H_g$ is the (finite-to-one) forgetful
map.  

The description of irreducible components of $\Delta_0[\barstk H_g]$
is somewhat more intricate, since the curve obtained by identifying
points $P$ and $Q$ of the hyperelliptic curve $C$ is hyperelliptic if
and only if $\iota(P) = Q$.  In fact, there is a decomposition of 
$\Delta_0[\barstk H_g]$ as a union 
\[
\Delta_0[\barstk H_g]  = \kappa_{g-1}(\barstk H_{g-1};1) \bigcup_{1
  \le i \le \floor{g½}} \lambda_{i,g-1-i}(\barstk H_{i;1} \cross \barstk H
_{g-1-i;1})  
\]
of irreducible divisors.  We refer to \cite{achterprieshyper,yamaki04} for more details.

The fact that, for $i>0$, $\Delta_i[\barstk H_g]$ is parametrized by
hyperelliptic curves with a {\em discrete} choice of point makes it
much more difficult to unravel the $p$-rank stratification $\del\stk
H_g$.  

As in Theorem \ref{thdimmgf}, it turns out that if $0 \le f \le
g$, then $\stk H_g^f$ is nonempty and pure of codimension $g-f$
\cite{glasspries}.  

This is used in proving the currently optimal hyperelliptic analogue to Theorem
\ref{propmgboundary}:

\begin{theorem}
\cite[Lemma 3.4]{achterprieshyper}
Suppose $g \ge 2$ and $0 \le f \le g$.  Let $S$ be an irreducible
component of $\stk H_g^f$.
\begin{alphabetize}
\item If $f>0$, then each irreducible component of $\Delta_0[\bar S]$
  contains either an irreducible component of $\kappa_{g-1}(\barstk
  H_{g-1;1}^{f-1})$ or of some $\lambda_{i,g-1-i}(\barstk H_{i;1}^{f_1} \cross
  \barstk H_{g-1-i;1}^{f_2})$ with $0 \le f_1 \le i$, $0 \le f_2 \le
  g-1-i$, $f_1+f_2 = f$.
\item If $f=0$, then $\bar S$ contains the image of an irreducible
  component of $\tilstk H_i^0 \cross \tilstk H_{g-i}^0$ for some $1
  \le i \le g-1$.
\end{alphabetize}
\end{theorem}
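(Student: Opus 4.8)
The plan is to carry the argument of \cite[Lemma~3.2]{achterpriesprank} over to the hyperelliptic locus; the only new features are that $\partial\barstk H_g$, and in particular $\Delta_0[\barstk H_g]$, is reducible, and that its pieces are cut out by finite clutching maps from products of moduli of hyperelliptic curves carrying \emph{labeled} ramification. Write $d=g-1+f$. By the hyperelliptic analogue of Theorem~\ref{thdimmgf} \cite{glasspries}, $\stk H_g^f$ is nonempty and pure of dimension $d$, hence $\bar S$ is irreducible of dimension $d$; since smooth curves are dense in the $p$-rank strata of stable ones, the strata $\barstk H_{g';1}^{f'}$ are all equidimensional of the expected dimension, and by semicontinuity every point of $\bar S$ carries a stable hyperelliptic curve of $p$-rank $\le f$. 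Finally, each boundary divisor $\Delta_i[\barstk H_g]$ is an effective Cartier divisor on $\barstk H_g$ (the restriction of $\Delta_i[\barstk M_g]$ to the closed substack $\barstk H_g$, which is Cartier since $\barstk H_g\not\subseteq\Delta_i[\barstk M_g]$), so the intersection of $\bar S$ with any finite union of boundary divisors not containing $\bar S$ is pure of codimension $1$ in $\bar S$.

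For part~(a), let $W$ be an irreducible component of $\Delta_0[\bar S]$; then $\dim W=d-1$, and, being irreducible, $W$ lies in a single irreducible piece of $\Delta_0[\barstk H_g]=\kappa_{g-1}(\barstk H_{g-1;1})\cup\bigcup_i\lambda_{i,g-1-i}(\barstk H_{i;1}\cross\barstk H_{g-1-i;1})$. If $W\subseteq\kappa_{g-1}(\barstk H_{g-1;1})$, pull $W$ back along the finite map $\kappa_{g-1}$ to an irreducible $W'\subseteq\barstk H_{g-1;1}$ of dimension $d-1$; relation~\eqref{eqprankdelta0} shows the genus-$(g-1)$ curve at each point of $W'$ has $p$-rank $\le f-1$, and since $\dim\barstk H_{g-1;1}^{f'}=(g-1)+f'$ while $\dim W'=(g-1)+(f-1)$, the generic $p$-rank along $W'$ is exactly $f-1$; hence the dense open $W'\cap\barstk H_{g-1;1}^{f-1}$ is dense in a whole component of the (pure-dimensional) locus $\barstk H_{g-1;1}^{f-1}$, whose image under $\kappa_{g-1}$ is the component sought. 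If instead $W\subseteq\lambda_{i,g-1-i}(\barstk H_{i;1}\cross\barstk H_{g-1-i;1})$, pull back along $\lambda_{i,g-1-i}$ (again finite) and project to the two factors: with $f_1,f_2$ the generic $p$-ranks along the images, the $p$-rank relation on this banana-type divisor (the total $p$-rank being $f_1+f_2+1=f$, the extra $1$ coming from the toric rank of the node) together with $\dim\barstk H_{i;1}^{f_1}+\dim\barstk H_{g-1-i;1}^{f_2}=(i+f_1)+((g-1-i)+f_2)=d-1=\dim W'$ forces $W'$ to be a component of $\barstk H_{i;1}^{f_1}\cross\barstk H_{g-1-i;1}^{f_2}$, and $W$ contains its image.

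For part~(b), $f=0$, so every point of $\bar S$ has $p$-rank $0$; hence $\bar S\cap\stk H_g=S$, and since the coarse moduli space of hyperelliptic curves is affine (it is the quotient of $\stk M_{0,2g+2}$ by $S_{2g+2}$) while $\bar S$ is proper of dimension $g-1\ge 1$, the boundary $\partial\bar S=\bar S\setcomp S$ is nonempty. Every curve in $\Delta_0[\barstk H_g]$ has $p$-rank $\ge 1$, so $\Delta_0[\bar S]=\emptyset$ and $\partial\bar S\subseteq\bigcup_{1\le i\le g-1}\Delta_i[\barstk H_g]$. Pick a component $W$ of $\partial\bar S$, lying in some $\Delta_i[\barstk H_g]$, the finite image of $\tilstk H_i\cross\tilstk H_{g-i}\ra\barstk H_g$; then $\dim W=d-1=g-2$, so the preimage $W'\subseteq\tilstk H_i\cross\tilstk H_{g-i}$ has dimension $g-2$. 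The clutching node here is separating, so \eqref{eqprankdeltai} forces $f_{C_1}+f_{C_2}=0$ at every point of $W'$, whence $W'\subseteq\tilstk H_i^0\cross\tilstk H_{g-i}^0$; as $\dim(\tilstk H_i^0\cross\tilstk H_{g-i}^0)=(i-1)+((g-i)-1)=g-2$ and this locus is pure-dimensional, $W'$ is one of its irreducible components, and $\bar S$ contains its image.

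The geometric content is exactly that of the $\stk M_g$ case — purity of the $p$-rank stratum, semicontinuity of the $p$-rank, and the combinatorics of the boundary — so the work is bookkeeping peculiar to $\stk H_g$: one must decide which irreducible piece of the reducible boundary a given component of $\partial\bar S$ lies in, and then run the dimension count on the correct parameter space. For the strata $\Delta_i[\barstk H_g]$ with $i>0$ that space involves moduli of hyperelliptic curves with \emph{labeled} Weierstrass points, so the point being attached is a discrete rather than a continuous datum and the relevant dimensions drop accordingly, and for the $\lambda_{i,g-1-i}$ strata one must account for the contribution of the node's toric rank to the $p$-rank. I expect this case analysis, rather than any single hard step, to be the main obstacle; the one genuinely non-combinatorial ingredient, needed only to make $\partial\bar S$ nonempty when $f=0$, is the affineness of the coarse moduli space of hyperelliptic curves.
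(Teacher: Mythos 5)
The survey does not prove this statement itself — it quotes \cite[Lemma 3.4]{achterprieshyper} — so your proposal has to be measured against that source, and it reconstructs the intended argument faithfully: the dimension count for hyperelliptic $p$-rank strata from \cite{glasspries}, purity (Lemma~\ref{lempurityprank}), the combinatorial decomposition of $\partial\barstk H_g$, and (for part (b)) affineness of the coarse space of $\stk H_g$ to force $\partial\bar S\neq\emptyset$.  This is essentially the same route.

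One point worth flagging: your $p$-rank bookkeeping on the $\lambda_{i,g-1-i}$ stratum gives $f_1+f_2+1=f$, i.e.\ $f_1+f_2=f-1$, whereas the statement as printed above says $f_1+f_2=f$.  Your version is the correct one.  The curve on this stratum has two components of genera $i$ and $g-1-i$ joined at two nodes, so the dual graph has first Betti number $1$ and the torus part contributes $1$ to the $p$-rank (so, to be precise, it is the loop formed by the two nodes, not a single node, that gives the extra $1$).  The dimension count confirms it: $\dim\bigl(\barstk H_{i;1}^{f_1}\cross\barstk H_{g-1-i;1}^{f_2}\bigr)=(i+f_1)+\bigl((g-1-i)+f_2\bigr)=g-1+f_1+f_2$, which can only match $\dim W=g+f-2$ if $f_1+f_2=f-1$; with $f_1+f_2=f$ that product would have dimension $g+f-1>\dim W$, an impossibility.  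So the inequality constraint in the displayed theorem appears to carry a typo, and your derivation repairs it.  Two small bookkeeping remarks: the preimage $W'$ of $W$ under a finite clutching map need not be irreducible, so pass to an irreducible component of $W'$ dominating $W$ before applying the dimension count; and in part (a) the $p$-rank at the generic point of $W$ is a priori only $\le f$, so the relation $f_1+f_2+1=f$ should be obtained as the \emph{output} of the dimension count rather than asserted at the outset — your argument does contain this, but the phrasing makes it look circular.
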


In contrast to the case of general curves (Corollary \ref{cortree}), at
present one only knows that hyperelliptic curves degenerate to {\em
  trees} of elliptic curves:

\begin{corollary}
\cite[Theorem 3.11(c)] {achterprieshyper}
Suppose $g \ge 2$ and $ 0 \le f \le g$.  Let $S$ be an irreducible
component of $\stk H_g^f$.  Then $\bar S$ contains the moduli point of
some tree of elliptic curves, of which $f$ are ordinary and $g-f$ are
supersingular.
\end{corollary}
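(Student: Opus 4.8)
The plan is to prove the statement for all $g\ge 1$ by induction on $g$, using \cite[Lemma 3.4]{achterprieshyper} to pass to the boundary and then invoke the inductive hypothesis on the smaller-genus factors; throughout, a single elliptic curve is regarded as a tree with one vertex. For $g\le 2$ there is nothing to do: a genus-one $p$-rank-$f$ component contributes one elliptic curve of the correct type, while $\stk H_2=\stk M_2$, so the case $g=2$ is already contained in Corollary \ref{cortree}. So assume $g\ge 3$ and the assertion in all genera $<g$.

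First take $f=0$. By part (b) of \cite[Lemma 3.4]{achterprieshyper}, $\bar S$ contains the image under the clutching morphism of an irreducible component $W=W_1\times W_2$ of $\tilstk H_i^0\times \tilstk H_{g-i}^0$ for some $1\le i\le g-1$. Each $W_j$ maps finitely and surjectively onto an irreducible component $S_j$ of $\stk H_i^0$, resp.\ $\stk H_{g-i}^0$, and by induction $\bar S_j$ contains the moduli point of a tree $T_j$ of supersingular elliptic curves with $i$, resp.\ $g-i$, vertices. A choice of labeling of the ramification points lifts $T_j$ to a point of $\bar W_j$, and since the clutching morphism is continuous it carries $\bar W_1\times \bar W_2$ into $\bar S$; the clutch of $T_1$ and $T_2$ at the chosen ramification points is a tree of $g$ supersingular elliptic curves lying in $\bar S$.

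Now take $f>0$. After verifying that $\Delta_0[\bar S]$ is nonempty --- which can be arranged using $p$-rank purity, much as in the proof of Corollary \ref{PnestedMg} --- part (a) of \cite[Lemma 3.4]{achterprieshyper} furnishes a component of $\Delta_0[\bar S]$ containing an irreducible component either of $\kappa_{g-1}(\barstk H_{g-1;1}^{f-1})$ or of some $\lambda_{i,g-1-i}(\barstk H_{i;1}^{f_1}\times \barstk H_{g-1-i;1}^{f_2})$ with $f_1+f_2=f$. In the second case one recurses on irreducible components of $\stk H_i^{f_1}$ and $\stk H_{g-1-i}^{f_2}$ and clutches by $\lambda_{i,g-1-i}$; a bookkeeping check --- the single additional unit of arithmetic genus produced by this clutching carries $p$-rank $0$ --- shows that $\bar S$ contains a tree of $g$ elliptic curves with $f$ ordinary vertices. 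In the first case one recurses on a component of $\stk H_{g-1}^{f-1}$ to obtain a tree $T'$ of $g-1$ elliptic curves, of which $f-1$ are ordinary. Since $g\ge 3$, $T'$ has at least one node; the marked point may therefore be specialized onto a rational bridge inserted at such a node, and then the map $\kappa_{g-1}$, which identifies a point $P$ with its hyperelliptic conjugate $\iota P$, turns that bridge into a nodal rational curve --- a degenerate elliptic curve of $p$-rank $1$ --- so that $\bar S$ again contains a tree of $g$ elliptic curves with $f$ ordinary vertices. This completes the induction.

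I expect the crux, and the reason one obtains only a tree rather than the chain of Corollary \ref{cortree}, to be the $\Delta_0$ case: the node created there is non-separating, so the extra genus-one vertex cannot be grafted on as a smooth ordinary elliptic tail but has to come from self-gluing a rational bridge whose position one cannot control. The two places where the real work concentrates are showing that $\bar S$ genuinely meets $\Delta_0$ when $f>0$, and correctly tracking the $p$-rank through the $\lambda_{i,g-1-i}$ and $\kappa_{g-1}$ clutchings.
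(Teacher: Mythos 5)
The paper states this as \cite[Theorem 3.11(c)]{achterprieshyper} and does not re-prove it, so your argument can only be assessed on its own terms. The $f=0$ case, which recurses via Lemma 3.4(b) and clutches trees along separating ($\Delta_i$, $i>0$) nodes after lifting to $\tilstk H_i^0$, is broadly in the right spirit, although you still need to check that the attaching vertices of the tree you produce descend to Weierstrass points (otherwise the clutch is not hyperelliptic).

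The $f>0$ case has a genuine gap. The corollary asserts that $\bar S$ contains a tree of $g$ \emph{smooth} elliptic curves — a compact-type curve, hence a point of $\barstk H_g \setcomp \Delta_0$. But your argument for $f>0$ runs entirely through $\Delta_0[\bar S]$, which by definition parametrizes curves with a non-separating node. In the $\kappa_{g-1}$ branch your construction ends, as you yourself note, with a \emph{nodal rational} component, not a smooth elliptic curve, so the configuration you produce is not a tree of elliptic curves and the conclusion ``$\bar S$ again contains a tree of $g$ elliptic curves'' does not follow. In the $\lambda_{i,g-1-i}$ branch the claim that the additional unit of arithmetic genus ``carries $p$-rank $0$'' is incorrect: the extra genus created by a non-separating identification is a $\gp_m$-factor and contributes $p$-rank $1$, and the resulting dual graph acquires a cycle, so again the output is not a tree of smooth elliptic curves. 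To reach an actual tree one must show that $\bar S$ meets some $\Delta_i$ with $i>0$, i.e.\ contains a separating-clutching degeneration with the right $p$-rank split; that is exactly the step the paper flags as delicate for $\stk H_g$ (the marked point must be Weierstrass), and Lemma 3.4 as quoted does not supply it for $f>0$. Finally, the non-emptiness of $\Delta_0[\bar S]$ is asserted by analogy with Corollary \ref{PnestedMg} but not actually established; that proof produces points of lower $p$-rank in $\bar S$, not points in $\Delta_0$, so the analogy needs to be made precise before Lemma 3.4(a) can be invoked at all.
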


\paragraph{Stratification by $p$-rank}

To some extent, the $p$-rank stratification of $\stk H_g$ is known to be much like that of $\stk
M_g$.

Corollary \ref{PnestedMg} holds,
{\em mutatis mutandis},  for $\stk H_g$.  (Part (a) relies only on dimension
counting; part (b), whose proof relies on degeneration, already
appears as \cite[Corollary 3.15]{achterprieshyper}.)

However, even for $g=3$, we start losing information.
It is not known if $\stk H_3^0$ is irreducible.  While $\stk H_2^0$ is connected, it is not clear if $\tilstk H_2^0$ is connected, making it more difficult to prove a result analogous 
to Corollary \ref{thmgconn} for hyperelliptic curves.

\paragraph{Stratification by Newton polygon}

Less is known about the Newton stratification of $\stk H_g$ than
that of $\stk M_g$.      However, it {\em is}
known that, for each irreducible component of $\stk H_3^0$, the
generic Newton polygon is $\nu_3^0$ \cite{oorthess}.  
 
The analogue of Proposition \ref{Pinduction} is valid for $\stk H_g$,
too.    In part (a), even though one has less control over
degenerations, one still knows that, for an irreducible component $S$
of $\stk H_g^f$, $\bar S$ contains an irreducible component of
$\kappa_{1,g-1}(\tilstk H_1¹ \cross \tilstk H_{g-1}^{f-1})$. Part (b)
is valid for $\stk H_g$, as well. The key observation is that, if $S$
is an irreducible component of $\stk H_{g-1}^{f-1}$, then
$\kappa_{g-1}(S\cross_{\stk H_{g-1}} \stk H_{g-1;1})$ is in the boundary
of {\em some} irreducible component of $\stk H_g^f$.

We conclude that the generic Newton polygon of each component of $\stk
H_g^{g-3}$ is $\nu_g^{g-3}$.  

\section{Some conjectures about Newton polygons of curves}
\label{secconjectures}

In this section, we discuss variations of the following question.

\begin{question} \label{Qexists}
Given $g$ and $p$, does every symmetric admissible Newton polygon of
height $2g$ occur for the Jacobian of a smooth projective curve of genus $g$?
\end{question}

The first open case of this question is when $g=4$, for the Newton polygons $\sigma_4$, $\nu_3^0 \oplus \sigma_1$, and $\nu_1^1 \oplus \sigma_3$.  Using Theorem \ref{propmgboundary}, it is not hard to see that each of these Newton polygons is realized by a singular curve of compact type.  It is unlikely that appeal to Shimura varieties, as in the proof of Lemma \ref{lemg4}, will resolve this question for all $p$.  For example, if $p$ is inert in $\rat(\zeta_3)$, then no abelian variety with $p$-rank one has an action by $\integ[\zeta_3]$.  In particular, $\nu_1^1\oplus \sigma_3$ is not the Newton polygon of any abelian variety with moduli point in $\stk Z$.

\subsection{Non-existence philosophy}
\label{subsecnonexistence}

In \cite{problemsoncurves}, Oort explains why the answer to Question
\ref{Qexists} could be {\em no}. 
Consider the partial ordering of Newton polygons.
Suppose that:
\begin{description}
\item{(i)} $\codim(\xi) \ge 3g-3$, i.e., the length of the longest chain of Newton polygons connecting $\xi$ and $\nu_g^g$ is larger than $3g-3$; and
\item{(ii)} the denominators of $\xi$ are ``large''.
\end{description}
Then \cite[Expectation 8.5.4]{problemsoncurves} states that one expects that there is no curve of compact type whose Jacobian has Newton polygon $\xi$.

By this reasoning, one expects that there is no curve of genus $11$ whose Jacobian has Newton polygon $\xi=G_{5,6} \oplus G_{6,5}$, 
with slopes $5/11$ and $6/11$.  On the other hand, in characteristic $p=2$, Blache found a curve of genus $11$ over ${\mathbb F}_2$, namely
$y^2+y=x^{23}+x^{21}+x^{17}+x^7+x^5$, which does have slopes $5/11$ and $6/11$.
%$L$-polynomial 
%\[2048t^{22} + 1024t^{20} + 512t^{18} + 256t^{16} + 128t^{14} + 64t^{12} - 32t^{11} + 32t^{10} + 16t^8 + 8t^6 + 4t^4 + 2t^2 + 1,\]
%which has slopes $5/11$ and $6/11$.

The dimension of $\cala_g$ is $g(g+1)/2$ and the dimension of its
supersingular locus is $\lfloor g^2/4 \rfloor$.  The length of the
longest chain of Newton polygons connecting the supersingular Newton
polygon $\sigma_g$ and the ordinary Newton polygon $\nu_g^g$ is the
difference between these, which is greater than $3g-3$ when $g \geq
9$.  It is still possible that every Newton polygon in the chain
occurs for a Jacobian, but if so, then there are two Newton polygons
$\xi_1$ and $\xi_2$, such that $\cala_g^{\xi_1}$ is in the closure of
$\cala_g^{\xi_2}$ but $\calm_g^{\xi_1}$ is not in the closure of
$\calm_g^{\xi_2}$.  In other words, under Condition (i), $\calm_g$ does
not admit a perfect stratification by Newton polygon.

The Newton polygon of a curve of compact type is the join of the Newton polygons of its components.
If the denominators of $\xi$ are all less than $g$, then one can try to construct a singular curve with Newton polygon 
$\xi$ from curves of smaller genus.  
For example, it is easy to see that $\sigma_g$ is the Newton polygon of a tree of supersingular elliptic curves.
If $\xi$ is indecomposable as a symmetric Newton polygon, then it cannot occur for the 
Jacobian of a singular curve of compact type.
As a means of making Condition (ii) more precise, one could restrict to the case that $\xi$ is indecomposable.

Another variation is to restrict to the Jacobians of smooth curves.
In fact, Oort conjectures that if $\xi_i$ is the Newton polygon of a point of $\calm_{g_i}$ for $i=1,2$, 
then the join of $\xi_1$ and $\xi_2$ occurs as the Newton polygon of a point of $\calm_{g_1+g_2}$, in other words, 
as the Newton polygon of the Jacobian of a {\it smooth} curve \cite[Conjecture 8.5.7]{problemsoncurves}.

\begin{remark}
The original motivation for this non-existence expectation was the following.
Let $Y^{(cu)}(K, g)$ denote the statement: There exists an abelian variety $A$ of dimension $g$ defined over $K$ 
which is not isogenous to the Jacobian of any curve of compact type;
(here, this is an isogeny of abelian varieties without polarization).
There is an expectation\footnote{attributed to Katz and to Oort by,
  respectively, Oort and Katz} that $Y^{(cu)}(\overline{\mathbb Q}, g)$ is true for every $g \geq 4$.
To prove $Y^{(cu)}(\overline{\mathbb Q}, g)$, it suffices to find a prime $p$ and a Newton polygon $\xi$ of height $2g$ 
such that $\xi$ does not occur as the Newton polygon of a Jacobian of a curve \cite[(8.5.1)]{problemsoncurves}.
It turns out that $Y^{(cu)}(\overline{\mathbb Q}, g)$ was proven by other methods \cite{chaioort12,tsimerman12}.
\end{remark}

\subsection{Supersingular curves}

Recall that $\sigma_g = \st{\half 1^{\oplus g}}$.  
The supersingular locus $\stk M_g^{\sigma_g}$ has 
been studied extensively. 
When $p=2$, Van der Geer and Van der Vlugt proved that there is a smooth curve of every genus which is supersingular.
More generally, the same methods show there exist supersingular curves of arbitrarily large genus  
defined over $\overline{\mathbb F}_p$ for all primes $p$.

\begin{example} 
Let $R(x)\in \bar\ff_p[x]$ be an additive polynomial, i.e., a polynomial of the form
$R[x]=a_0 + a_1 x^p + \cdots + a_d x^{p^d}$.
Consider the Artin-Schreier curve $Y$ with affine equation
$y^p-y=xR(x)$; it  has genus $(p-1)(p^d+1)/2$.
Then $Y$ is supersingular by \cite[Theorem 13.7]{vdgvdv}.   
\end{example}

\subsection{Other non-existence results}

To this date, the only non-existence results about Newton polygons are for Jacobians of curves with automorphisms.
Newton polygons of degree $p$ covers of ${\mathbb P}^1$ have been studied using techniques for exponential sums and Dwork cohomology.  
For example \cite{scholtenzhu}, when $p=2$, there are no hyperelliptic
curves of genus $2^n-1$ which are supersingular. 
More generally, there are conditions on the first slope of the Newton polygon of a degree $p$ cover of the projective line \cite{blachefirstslope}.

 \bibliographystyle{hamsplain}
\bibliography{hp}

\end{document}